\newtheorem{theorem}{Theorem}
\newtheorem{proposition}{Proposition}
\theoremstyle{remark}
\newtheorem{remark}{Remark}
\newtheorem{example}{Example}
\newcommand{\R}{\mathbb{R}}
\newcommand{\X}{X}
\title{Impact of storage competition on energy markets}
\author{James Cruise\footnote{Heriot-Watt University.  Research
    supported by EPSRC grant EP/I017054/1},
   Lisa Flatley\footnote{University of Warwick}
  \ and Stan Zachary\footnotemark[1]}
\date{\today}
\begin{document}

\maketitle

\begin{abstract}
  We study how storage, operating as a price maker within a market
  environment, may be optimally operated over an extended period of
  time.  The optimality criterion may be the maximisation of the
  profit of the storage itself, where this profit results from the
  exploitation of the differences in market clearing prices at
  different times.  Alternatively it may be the minimisation of the
  cost of generation, or the maximisation of consumer surplus or
  social welfare.  In all cases there is calculated for each
  successive time-step the cost function measuring the total impact of
  whatever action is taken by the storage.  The succession of such
  cost functions provides the information for the storage to determine
  how to behave over time, forming the basis of the appropriate
  optimisation problem.  Further, optimal decision making, even over a
  very long or indefinite time period, usually depends on a knowledge
  of costs over a relatively short running time horizon---for storage
  of electrical energy typically of the order of a day or so.

  We study particularly competition between multiple stores, where the
  objective of each store is to maximise its own income given the
  activities of the remainder.  We show that, at the Cournot Nash
  equilibrium, multiple large stores collectively erode their own
  abilities to make profits: essentially each store attempts to
  increase its own profit over time by overcompeting at the expense of
  the remainder.  We quantify this for linear price functions

  We give examples throughout based on Great Britain spot-price market
  data.
\end{abstract}

\section{Introduction}
\label{sec:introduction}
There has been much discussion in recent years on the role of storage
in future energy networks.  It can be used to buffer the highly
variable output of renewable generation such as wind and solar power,
and it further has the potential to smooth fluctuations in demand,
thereby reducing the need for expensive and carbon-emitting peaking
plants.  For a discussion of the use of storage in providing multiple
buffering and smoothing capabilities, including the ability to
integrate renewable generation into energy networks see, for example,
the fairly recent review by Denholm et al (2010)~\cite{DEKM}, and the
many references therein.  Within an economic framework much of the
value of energy storage may be realised by allowing it to operate in a
market environment, provided that the latter is structured in such a
way as to allow this to happen.  Thus the smoothing of variations in
demand between, for example, nighttime when demand is low and daytime
when demand in high may be achieved by allowing a store to buy energy
at night when the low demand typically means that it is relatively
cheap, and to sell it again in the day when it is expensive.
Similarly, the use of storage for buffering against shortfalls in
renewable generation may---at least in part---be effected by allowing
storage to operate in a responsive spot-price market when prices will
rise at the times of such shortfall.  We remark though that if it is
intended that the use of storage should facilitate, for example, a
reduction in carbon emissions, then there is of course no guarantee
that a market environment will in itself permit this to happen; it may
be necessary that the market itself, and the rules under which it
operates, are correctly structured so as to penalise or prohibit
environmentally damaging generation or to reward clean energy
production---for some recent insights into the possible unexpected
side effects of storage operating in a market, see Virasjoki et al~\cite{VRSS}.

A small store may be expected to function as a price-taker, buying and
selling so as, for example, to maximise its own profit over time.
However, a larger store will act as a price-maker, perhaps
significantly affecting the market in which it operates, and thus also
affecting quantities such as generator costs, consumer surplus and
social welfare.  Further a number of larger stores, by competing with
each other, may smooth prices to the point where they are unable to
make sufficient profits as to be economically viable.

Aspects of many of these issues have been explored in the literature.
Recent work on the use of storage in a specifically market environment
is given by Gast et al~\cite{GTL,GLTP}, Graves et al~\cite{GJM}, Hu et
al~\cite{HCBJ} and Secomandi~\cite{Sec2010}.  Sioshansi et at~\cite{SDJW}
study the effects of storage on producer and consumer surplus and on
social welfare.  Sioshansi~\cite{Sio2014} gives an example where storage may
may reduce social welfare.  Gast et el~\cite{GLTP} show how in appropriate
circumstances storage may be used to minimise generation costs and
thus maximise consumer welfare.


In the present paper we aim to develop a more comprehensive
mathematical theory of the way in which storage interacts with the
market in which it operates.  Our fundamental assumption is that each
individual store operates over an extended period of time in such a
way as to optimise its ``profit''---or equivalently minimise its
costs---with respect to time-varying cost functions presented to it.
These may represent either the prevailing costs within a free market,
as may be natural when the store is independently owned, or adjusted
costs which take into account the wider impact of the stores
activities, as would be appropriate when the store was owned, for
example, by the generators or by society---see
Section~\ref{sec:variant-problems}.  Thus if it is desirable that a
store should function in a particular way---for example, to minimise
generation costs---it may be fed the appropriate cost signals and,
given those signals, left to perform as an autonomous agent.  Such an
approach is notably desirable in facilitating distributed control and
optimisation within a possibly complex environment.  In this paper are
particularly interested in studying the economic effects of
competition between multiple stores, not least on the viability of the
stores themselves.  The typically high capital costs of storage, in
relation to operating costs, mean that competition between stores may
reduce price differentials across time to the extent that stores are
unable to make sufficient operating profit as to permit the recovery
of their capital costs.

Within our analysis we therefore treat storage as generating its
revenue by arbitrage within a market in which prices are low at times
of energy surplus and high at times of scarcity.  While, within an
appropriately structured and responsive market, this may allow storage
to operate so as to realise many of its economic benefits, we
acknowledge that there are many other uses of storage whose benefits
may not be so easily captured.  Notably this is the case where storage
needs to react on a very short time scale, for example to compensate
for sudden shortfalls of generation, or to provide stability within a
system, and where there is insufficient time for the value of such
actions to be captured within a spot market environment.  For some
work on the simultaneous use of storage for both arbitrage and
buffering against the effects of sudden events see Cruise and
Zachary~\cite{CZ}, while for work on a whole systems assessment of the
value of energy storage see Pudjianto \emph{et al}~\cite{PADS}.

We outline in Section~\ref{sec:model} the model for the market in
which storage operates.  In particular this allows for supply and
demand which are sensitive to price, and hence also for an impact on
price of the market activities of the storage itself (so that the
storage may be considered as a price maker).  We assume for the moment
(but see also below) that a single store wishes to optimise its own
profit, or minimise its own costs, by trading in the market, we
formulate the corresponding optimisation problem faced by the store
and we state how it may be solved.  Formally the environment is
deterministic, but we discuss also the extent to which it is possible
to proceed similarly in a stochastic environment.

In Section~\ref{sec:single-store-market} we study the effect of a
single profit-maximising store in a market.  We look at its effect on
both market prices and on consumer surplus and give sensitivity
results for the variation of the size of the store.  We give examples
based on Great Britain market data.

In Section~\ref{sec:n-competing-stores} we study a number of competing
stores operating in a market.  We consider possible models of
competition, whereby the stores make bids and clearing prices in the
market are determined.  We identify Nash equilibria for the model of
competition in which stores bid \emph{quantities}---a generalisation
of Cournot competition---give existence and uniqueness results, and
show how equilibria may be determined.  We further show that, even for
this arguably most favourable model of competition (from the point of
view of the stores) an oversupply of storage capacity leads to a
situation in which, with linear price functions, the \emph{total}
profit made by all the stores is approximately inversely proportional
to their number.  Essentially what happens here is that, relative to a
cooperative solution, each store over-trades in order to acquire a
larger share of total profit, thereby impacting on the market in such
a way as to reduce price differentials over time and thus also the
profits to be made by other stores.  Thus a sufficiently large number
of stores are unable to make profits, and so---presumably---recoup
their capital costs.  In this section we also give examples again
based on GB market data and relating to such competition between
stores.

Finally, in Section~\ref{sec:variant-problems} we consider variant
problems in which storage (instead of consisting of independent
profit-maximising entities) is managed, for example, for the optimal
benefit of consumers, or for the optimal benefit of generators.  We
show that, by suitable redefinition of cost functions, these variant
problems may be reduced mathematically to those already studied.

\section{Model}
\label{sec:model}

We now formulate our model for a set of $n\ge1$ stores operating in an
energy market.  Formally we treat prices and costs as deterministic.
However, in a stochastic environment it may be reasonable, at each
successive point in time, to replace future prices and costs by their
expected values and to then proceed as in the deterministic case.
That this can, in many cases, lead to optimal or near optimal
behaviour for a single store is shown in Cruise \emph{et
  al}~\cite{CFGZ}.  It is further the case that, for many
applications---notably electricity storage---optimal decision making
over long or even indefinite time horizons nevertheless only requires
a real-time knowledge of future costs over a short running time
horizon, something which is again shown formally in~\cite{CFGZ}.  Thus
electricity storage may make its profits by exploiting differences
between daytime and nighttime prices and if these are sufficiently
different that the storage typically fills and empties on a daily---or
almost daily basis---then ongoing optimal management may never require
a knowledge of future prices for more than a few days ahead.

We assume that each store $j$ has an energy capacity $E_j$ and input
and output rate constraints $P_{Ij}$ and $P_{Oj}$ respectively (the
maximum amount of energy which can enter or leave the store per unit
time).  Each such store $j$ also has an \emph{efficiency}
$\epsilon_j\in(0,1],$ where $\epsilon_j$ is the number of units of
energy output which the store can achieve for each unit of energy
input.  We assume without loss of generality that any loss of energy
due to inefficiency occurs immediately after leaving the store (so
that the above capacity and rate constraints---both input and
output---apply to volume of energy input).  For simplicity we also
assume that there is no time-dependent leakage of energy from the
stores; the simple adjustments required to deal with any such leakage
are analogous to those described in~\cite{CFGZ}.

We work in discrete time $t=1,\dots,T$ for some finite time horizon
$T$.  Associated with each such time $t$ is a price function $p_t$
such that $p_t(x)$ is the market price per unit of energy when $x$ is
the total amount (positive or negative) of energy bought from the
market by all the stores, i.e.\ $xp_t(x)$ is the total cost to the
stores of buying this energy.  (Each of the functions~$p_t$ is of
course influenced by everything else that is happening in the market
at time~$t$; it explicitly measures only the further effect on price
of the activity of the stores.)  We assume throughout that, over the
range of possible values of its argument (i.e.\ the interval
$[-\sum_{j=1}^n\epsilon_jP_{Oj},\sum_{j=1}^nP_{Ij}]$), each of the
functions $p_t$ is positive and increasing and is such that, for any
constant $k$, the function of $x$ given by $xp_t(x+k)$ is convex and
increasing.  (The quantity $xp_t(x+k)$ is the total cost to a store of
buying $x$ units of energy---again positive or negative---at time~$t$
when the total amount bought by the remaining stores at that time is
$k$.)  An important case in which these conditions are satisfied, and
which we consider in detail later, is that where the prices are
linearised so that
\begin{equation}
  \label{eq:1}
  p_t(x) = \bar p_t + p'_t x
\end{equation}
where $\bar p_t>0$ and where $p'_t\ge0$ is such that the
function~$p_t$ remains positive for all possible values of its
argument as above.  This should, for example, be a good approximation
whenever the total storage capacity is not too large in relation to
the total size of the market in which the stores operate.  In such a
case, we may take $\bar p_t=p_t(0)$ (i.e.\ the price at time $t$
without storage on the system) and $p_t'=p_t'(0)$.  More generally,
the above conditions on the functions $p_t$ seem likely to be
satisfied in many cases, for example when they do not differ too much
from the above linear case, and are in all cases readily checkable.

In particular if $s_t(p)$ is the amount externally supplied to the
market at time $t$ and price $p$ and $d_t(p)$ is the corresponding
total demand at that time and price---and if the functions~$s_t$ and
$d_t$ are given independently of the activities of any stores---then
we may define the residual supply function $R_t$ at that time by
$R_t(p)=s_t(p)-d_t(p)$; if $R_t$ is continuous and strictly increasing
then we have that $p_t$ is the inverse of the function $R_t$ and is
similarly continuous and strictly increasing. If, furthermore, each of
the functions~$R_t$ is differentiable and prices take the
form~(\ref{eq:1}), with $\bar p_t=p_t(0)$ and $p_t'=p_t'(0),$ then we
may relate $p_t'$ to the point elasticities of supply and demand at
price $\bar p_t$, denoted $e_s$ and $e_d$ respectively, in the
following way:
\begin{align}
  \label{eq:2}
  p_t'=\frac{\bar p_t}{e_s s_t(\bar p_t)-e_d d_t(\bar p_t)}.
\end{align}
This method of determining the price functions $p_t$ is especially
relevant when the other players in the market make their decisions
without taking the stores' actions into account, perhaps due to the
relatively small level of storage capacity in relation to the rest of
the market.  With sufficient information, more complex price functions
$p_t$ could be derived, for example by considering games between the
stores and the rest of the energy system.

We denote the successive levels of each store~$j$ by a vector
$S_j=(S_{j0},\dots,S_{jT})$ where each $S_{jt}$ is the energy level of the
store at time~$t$.  It is convenient to assume that the initial and
final levels of the store are constrained to fixed values $S^*_{j0}$
and $S^*_{jT}$ respectively.  For each such vector~$S_j$ and for each
$t=1,\dots,T$, define also $x_t(S_j)=S_{jt}-S_{j,t-1}$ to be the
amount (positive or negative) by which the level of the store is
increased at time~$t$.

In order to incorporate efficiency, it is helpful to define, for each
store $j$, the function $h_j$ on $\R$ by $h_j(x)=x$ for $x\ge0$ and
$h_j(x)=\epsilon_j x$ for $x<0$.  For each time $t$ such that
$x_t(S_j)\ge0$, store~$j$ buys $x_t(S_j)$ units of energy from the
market, while for $t$ such that $x_t(S_j)<0$, it sells
$-\epsilon_jx_t(S_j)$ units of energy to the market.  For each
store~$j$ and time $t$, and given the changes $x_{it}$, $j\neq i$,
(positive or negative) in the levels of the remaining stores at that
time, define now the \emph{cost
  function}~$C_{jt}(\,\cdot\,;\,x_{it},j\neq i)$ by
\begin{equation}
  \label{eq:3}
  C_{jt}(x_{jt};\,x_{it},j\neq i)
    = h_j(x_{jt})p_t\bigg(\sum_{i=1}^nh_i(x_{it})\bigg);
\end{equation}
this represents the cost to store~$j$ of increasing its level by
$x_{jt}$ (again positive or negative) at time~$t$, given the
corresponding activities of the remaining stores at that time.  Note
that the conditions on the function~$p_t$ ensure that
$C_{jt}(x_{jt};\,x_{it},j\neq i)$ is an increasing convex function
of its principal argument $x_{jt}$ and takes the value zero when this
argument is zero.



In particular if the objective of store $j$ is to optimise its profit,
given the \emph{policy} over time $S_i=(S_{i0},\dots,S_{iT})$ of every
other store $i\neq j$, then it faces the following optimisation problem:
\begin{compactenum}[]
\item $\mathbf{P}_j$: Choose $S_j=(S_{j0},\dots,S_{jT})$ so as to
  minimise the function of $S_j$ given by
  \begin{equation}
    \label{eq:4}
    \sum_{t=1}^T C_{jt}(x_t(S_j);\,x_t(S_i),j\neq i)
  \end{equation}
  subject to the capacity constraints
  \begin{gather}
    \label{eq:5}
    S_{j0}=S^*_{j0}, \qquad S_{jT}=S^*_{jT}, \qquad 0 \le S_{jt}\le
    E_j, \quad 1 \le t \le T-1.
  \end{gather}
  and the rate constraints
  \begin{equation}
    \label{eq:6}
    x_t(S_j) \in \X_j,
    \qquad 1 \le t \le T,
  \end{equation}
  where $X_j=\{x:-P_{Oj}\leq x\leq P_{Ij}\}$.
\end{compactenum}
Note that the observed convexity of the cost
functions~$C_{jt}(\,\cdot\,;\,x_{it},j\neq i)$ ensures that a solution
to the optimisation problem~$\mathbf{P}_j$ always exists.

At various points we make use of the following result, taken
from~\cite{CFGZ}, and in which each of the vectors $\mu^*_j$ is
essentially a vector of (cumulative) Lagrange multipliers.

\begin{proposition}
  \label{prop:1}
  For any store $j=1,\dots,n$, and for any fixed policies $S_i$ of
  every other store $i\neq j$, suppose that there exists a vector
  $\mu_j^*=(\mu^*_{j1},\dots,\mu^*_{jT})$ and a value
  $S_j^*=(S^*_{j0},\dots,S^*_{jT})$ of $S_j$ such that
  \begin{compactenum}[(i)]
  \item $S_j^*$ is feasible for the stated problem $\mathbf P_j$;
  \item for each $t$ with $1\le t\le T$, $x_t(S_j^*)$ minimises
    \begin{displaymath}
      C_{jt}(x_{jt};\,x_t(S_i),j\neq i)-\mu^*_{jt}x_{jt}
    \end{displaymath}
    in $x_{jt}\in\X_j$; and
  \item the pair $(S_j^*,\mu_j^*)$ satisfies the complementary slackness
    conditions, for $1\le t\le T-1$,
    \begin{equation}
      \label{eq:7}
      \begin{cases}
        \mu^*_{j,t+1} = \mu^*_{jt} & \quad\text{if $0 < S^*_{jt} < E_j$,}\\
        \mu^*_{j,t+1} \le \mu^*_{jt} & \quad\text{if $S^*_{jt} = 0$,}\\
        \mu^*_{j,t+1} \ge \mu^*_{jt} & \quad\text{if $S^*_{jt} = E_j$.}
      \end{cases}
    \end{equation}
  \end{compactenum}
  Then $S_j^*$ solves the above optimisation problem $\mathbf{P}_j$.
  Further, the given convexity of the cost
  functions~$C_{jt}(\,\cdot\,;\,x_t(S_i),j\neq i)$ guarantees the
  existence of such a pair $(S^*_j,\mu^*_j)$.
\end{proposition}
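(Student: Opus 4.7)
The plan is to treat $\mathbf{P}_j$ as a convex program in the free variables $S_{j1},\dots,S_{j,T-1}$ (the endpoints being fixed by~(\ref{eq:5})) and to apply KKT sufficiency. By the hypothesised convexity of each $C_{jt}(\,\cdot\,;\,\cdot)$ in its first argument and the linearity of the map $S_j \mapsto x_t(S_j) = S_{jt} - S_{j,t-1}$, the objective~(\ref{eq:4}) is a convex function of $S_j$, and all constraints are affine and define a compact polyhedron. Hence the KKT conditions are both necessary and sufficient for optimality, and the task is to check that conditions (i)--(iii) are precisely the KKT conditions repackaged in terms of the vector~$\mu^*_j$.

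Concretely, I would dualise only the capacity constraints~(\ref{eq:5}), attaching nonnegative multipliers $\lambda_t$ to $S_{jt} \ge 0$ and $\gamma_t$ to $S_{jt} \le E_j$ for $1 \le t \le T-1$, while leaving the rate constraint $x_t(S_j) \in X_j$ inside an inner minimisation carried out separately at each time. Because $S_{jt}$ enters the objective only through the two consecutive increments $x_t$ and $x_{t+1}$, first-order stationarity of the Lagrangian reduces, for each $1 \le t \le T-1$, to a balance between the $x$-subdifferentials of $C_{jt}$ and $C_{j,t+1}$ and the quantity $\lambda_t - \gamma_t$. Defining $\mu^*_{jt}$ by the backward recursion $\mu^*_{jt} - \mu^*_{j,t+1} = \lambda_t - \gamma_t$ (with the terminal value $\mu^*_{jT}$ absorbed into the gauge), the Lagrangian decouples across $t$, so that minimisation over $x_{jt} \in X_j$ at time~$t$ becomes exactly condition~(ii). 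The nonnegativity of $\lambda_t, \gamma_t$ together with complementary slackness $\lambda_t S^*_{jt} = 0$ and $\gamma_t(E_j - S^*_{jt}) = 0$ then translates directly into the three cases of~(\ref{eq:7}).

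Existence of the pair $(S_j^*, \mu_j^*)$ comes from compactness of the feasible set plus continuity of the convex objective (yielding a minimiser~$S_j^*$), together with the standard KKT theorem for a convex program whose constraints are all affine---no further constraint qualification is needed---which supplies the $\lambda_t, \gamma_t$ and hence $\mu^*_j$.

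The only mildly delicate step is the bookkeeping for the cumulative identification of $\mu^*_j$: because a multiplier attached locally at time $s$ propagates into $\mu^*_{jt}$ for every $t \le s$, one has to fix signs carefully so that the three alternatives ``interior / at $0$ / at $E_j$'' for $S^*_{jt}$ line up with the three cases in~(\ref{eq:7}). Since the same result is explicitly quoted from~\cite{CFGZ}, I would cross-reference that paper for the verbatim sign accounting rather than reproduce it here.
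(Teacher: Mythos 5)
Your proposal is correct. Note first that the paper itself offers no proof of Proposition~\ref{prop:1}: it is quoted verbatim from~\cite{CFGZ}, so there is no in-paper argument to compare against; your Lagrangian/KKT route is essentially the standard argument underlying that reference. The sign bookkeeping you flag as delicate does come out right: writing $S_{jt}=S_{j0}+\sum_{s\le t}x_s$, dualising $S_{jt}\ge 0$, $S_{jt}\le E_j$ with $\lambda_t,\gamma_t\ge0$ and the terminal equality with an unrestricted multiplier $\nu$, the coefficient of $x_s$ in the Lagrangian is $-\bigl(\nu+\sum_{t\ge s}(\lambda_t-\gamma_t)\bigr)$, so setting $\mu^*_{js}=\nu+\sum_{t=s}^{T-1}(\lambda_t-\gamma_t)$ gives exactly your recursion $\mu^*_{jt}-\mu^*_{j,t+1}=\lambda_t-\gamma_t$, the decoupled per-$t$ minimisations of condition~(ii), and, via complementary slackness of $\lambda_t,\gamma_t$, the three cases of~\eqref{eq:7}. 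Two refinements are worth making. First, the sufficiency half does not need convexity or the KKT machinery at all: with $d_t=S_{jt}-S^*_{jt}$ (so $d_0=d_T=0$), summation by parts gives $\sum_t\mu^*_{jt}\bigl(x_t(S_j)-x_t(S^*_j)\bigr)=\sum_{t=1}^{T-1}d_t(\mu^*_{jt}-\mu^*_{j,t+1})\ge0$ by~\eqref{eq:7}, and combining with~(ii) applied at $x_{jt}=x_t(S_j)$ yields optimality of $S^*_j$ directly; this matches the proposition's phrasing, in which convexity is invoked only for \emph{existence} of the pair $(S^*_j,\mu^*_j)$. Second, for the existence half your observation that all constraints are affine (so no Slater-type qualification is needed) is the right one, but since $C_{jt}$ has a kink at $0$ the stationarity conditions must be stated with subdifferentials, as you indicate, and one should also note that feasibility of $\mathbf{P}_j$ (nonemptiness of the constraint polyhedron) is implicitly assumed. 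With those points made explicit, the argument is complete and self-contained, whereas the paper simply defers to~\cite{CFGZ}.
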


In the case of a single store, \cite{CFGZ} provides an algorithm which
determines a suitable pair $(S_1^*,\mu_1^*)$ satisfying the conditions
(i)--(iii) above.  A key advantage of the algorithm is its
exploitation of the result that the optimal decision of a store at any
each successive time~$t$ typically depends only on the price
information associated with a relatively short interval of time
subsequent to $t$.  The convexity of the cost functions is required
only to guarantee the existence of such a pair, but as long as such a
pair exists, the algorithm could be implemented (with some obvious
adjustments) to determine the optimal policy of the store under more
general cost functions---see Flatley \emph{et al}~\cite{FMW} for a
discussion of this.  In Section~\ref{sec:n-competing-stores} we adapt
the algorithm in~\cite{CFGZ} to the case of $n$ competing stores.

\begin{remark}
  In cases where the stores are not independent profit maximising
  entities but are instead owned by, for example, the generators or by
  society, the above cost functions~$C_{jt}$ may be appropriately
  modified so that the problems~$\mathbf{P}_j$ continue to define
  optimal behaviour for the stores; see
  Section~\ref{sec:variant-problems} for a discussion of how this may
  be done.
\end{remark}

\section{The single store in a market}
\label{sec:single-store-market}

In the case $n=1$ of a single store it is convenient to drop the
subscript~$j$ and to write $S$ for $S_j$, etc.
The single-store optimisation problem is then to choose
$S=(S_0,\dots,S_T)$ so as to minimise
\begin{displaymath}
  \sum_{t=1}^T C_t(x_t(S))
\end{displaymath}
(where the $C_t$ are the cost functions defined by \eqref{eq:3})
subject to the capacity constraints \eqref{eq:5} and rate
constraints~\eqref{eq:6}.


For simplicity we assume the strict convexity of the cost functions
$C_t$---as, for example, will be the case when the linear
approximation~\eqref{eq:1} holds with $p'_t>0$ for each $t$.  This
strict convexity is sufficient to guarantee the uniqueness of the
solution $S^*$ of the optimisation problem~$\mathbf{P}$.


\subsection{Sensitivity of store activity to capacity and rate
  constraints}
\label{sec:sensitivity-results}

Let $(S^*,\mu^*)$ be the pair identified in Proposition~\ref{prop:1},
defining the solution $S^*$ of the above optimisation
problem~$\mathbf{P}$.  Then the market clearing price at each time~$t$
is $p_t(h(x_t(S^*))$.  The successive clearing prices then determine
such quantities as consumer surplus---in the way we describe later.

As a measure of the sensitivity of the market to variation of the size
of the store, we use Proposition~\ref{prop:1} to describe briefly how
variation of either the capacity or the rate constraints of the store
impacts on the solution~$S^*$ of $\mathbf{P}$.
Proposition~\ref{prop:1} continues to hold when we allow either the
capacity or the rate constraints of the store to depend on the
time~$t$.  Therefore it is sufficient to consider the effect of
variation of these constraints at any single time~$t_0$.

Consider first the effect of an arbitrarily small 
increase (positive or negative) $\delta E_{t_0}$ in the capacity of
the store at time~$t_0$; since the initial and final levels~$S_0^*$
and $S^*_T$ are fixed we assume $0<t_0<T$.  It is clear from
Proposition~\ref{prop:1} that this infinitesimal change has no effect
on $S^*$ unless $S^*_{t_0}=E$; further if $\delta E_{t_0}>0$ we also
require the strict inequality $\mu^*_{t_0+1}>\mu^*_{t_0}$.  Under
these conditions there exist times $t_1<t_0<t_2$, such that the effect
of the increment~$\delta{}E_{t_0}$---provided it is indeed
sufficiently small---is to change $\mu^*_t$, and so also $x_t(S^*)$
(via the condition~(ii) of Proposition~\ref{prop:1}), for $t$ such
that $t_1<t\le t_0$, both the original and the new values of $\mu^*_t$
being constant over this interval, and to similarly change $\mu^*_t$
and $x_t(S^*)$ for $t$ such that $t_0<t\le t_2$, again both the
original and the new values of $\mu^*_t$ being constant over this
interval; all changes within the second of the above intervals have
the opposite sign to those within the first; for all remaining values
of $t$, the parameter~$\mu^*_t$ remains unchanged.  The change in
$\mu^*_t$ over each of the above intervals is readily determined by
the requirement that now $S^*_{t_0}=E+\delta E_{t_0}$.  (Thus, for
example, for a perfectly efficient store and twice differentiable cost
functions~$C_t$, the effect of an increment $\delta E_{t_0}>0$---where
$t_0$ is such that $\mu^*_{t_0+1}>\mu^*_{t_0}$---will be to increase
$x_t(S^*)$ in proportion to $1/C_t''(x_t(S^*))$ for times~$t$ such
that $t_1<t\le t_0$ and at which the input rate constraint is
nonbinding, and to similarly decrease $x_t(S^*)$ in proportion to
$1/C_t''(x_t(S^*))$ for times~$t$ such that $t_0<t\le t_2$ and at
which the output rate constraint is nonbinding.)

Similarly an arbitrarily small 
change at time~$t_0$ in either the input or
the output rate constraint has no effect on $(S^*,\mu^*)$ unless
$\mu^*_{t_0}$ and $x_{t_0}(S^*)$ are such that that constraint is
binding in the solution of the minimisation problem of (ii) of
Proposition~\ref{prop:1}.  The effect is then again to change
$\mu^*_t$ and $x_t(S^*)$ for those $t$ in an interval which includes
$t_0$; both this interval and the required changes are again readily
identifiable from that proposition.

\subsection{Impact of a store on prices and consumer surplus}
\label{sec:impact-store-prices}

\paragraph{Impact on prices.}
\label{sec:impact-prices}

In general we may expect the impact of the store on the market to be
that of smoothing prices over time: the store will in general buy at
times when prices are low, thereby competing in the market and
increasing prices at those times, and similarly sell at times when
prices are high, thereby decreasing them at those times.  Relaxing the
power rates or capacity constraints of the store may then be expected
to result in further smoothing of the prices, as the store is able to
buy and sell more at times of low and high prices, thereby augmenting
the above effect.  We might also expect that increasing the efficiency
of the store will further smooth prices, but this is not so clear-cut,
as we illustrate in the following example.

\begin{example}
  Consider price functions of the linear form~(\ref{eq:1}) and a store
  which operates over just two time steps ($T=2$), starting and
  finishing empty but not otherwise subject to capacity or rate
  constraints.  Suppose further that $p_2=ap_1>0$ for some $a>1$.
  Then, for efficiency~$\epsilon$, the store buys $x(\epsilon)$ units
  of energy at time~$1$ and sells $\epsilon x(\epsilon)$ units at
  time~$2$, where
  \begin{equation}
    \label{eq:8}
    x(\epsilon)=
    \begin{cases}
      0 & \text{if} \quad \epsilon a<1\\
      \dfrac{\epsilon p_2-p_1}{2(p_1'+\epsilon^2 p_2')} &
      \text{otherwise}.
    \end{cases}
  \end{equation}
  In the presence of the store the difference between the market
  clearing price at time~$t_2$ and that at time~$t_1$ is given by
  $p_2(\epsilon x(\epsilon))-p_1(x(\epsilon))$, and it is easy to
  check that for suitable values of the parameters $\bar p_t$, $p'_t$,
  $t=1,2$, this expression is an increasing function of $\epsilon$ for
  $\epsilon$ sufficiently close to $1$---contrary to the expectation
  mentioned above.
\end{example}




\paragraph{Impact on consumer surplus.}
\label{sec:impact-cons-surpl}

The \emph{consumer surplus} associated with a demand function~$d$ and
clearing price~$p_0$ is usually defined as
$\int_{p_0}^\infty d(p)\,dp$, and so the consumer surplus of the
store's optimal strategy~$S^*$ is given by
\begin{equation}
  \label{eq:9}
  \sum_{t=1}^T\int_{p_t(h(x_t(S^*)))}^{\infty}d_t(p) \ dp,
\end{equation}
where $d_t(p)$ is the consumer demand associated with price $p$ at
time $t$.  
If the size or activity level of the store is such that the price
changes caused by its introduction are relatively small, and we
additionally make the linear approximation~\eqref{eq:1}, then the
change in consumer surplus due to the introduction of the store is
well approximated by
\begin{equation}
  \label{eq:10}
  -\sum_{t=1}^T h(x_t(S^*))p'_td_t(\bar p_t).
\end{equation}
It might reasonably be expected that, if the store is reasonably
efficient ($\epsilon$ is close to one) and if prices are
well-correlated with demand, then the store will buy ($x_t>0$) at
times of low consumer demand and sell ($x_t<0$) at times of high
consumer demand, and that this will have a beneficial effect on
consumer surplus---as suggested by~\eqref{eq:10} whenever the price
sensitivities~$p'_t$ are sufficiently similar to each other.  However,
these price sensitivities $p_t'$ do need to be taken into account.
Again we give an example.

\begin{example}
  Consider again a store with linear prices of the form~(\ref{eq:1}),
  which starts and finishes empty and which operates over just two
  time steps, i.e.\ $T=2$.  Assume that the power ratings of the store
  exceed its capacity and that demand is completely inelastic, so
  that, for $t=1,2,$ there exists $d_t^*\geq 0$ such that
  $d_t(p)=d_t^*$ for all prices $p$.  Then, from \eqref{eq:10}, as
  long as $p_1<\epsilon p_2$, the change in consumer surplus on
  introducing the store to the electricity network is
  \begin{align*}
    \min
    \left(\frac{\epsilon p_2-p_1}{2(p_1'+\epsilon^2 p_2')},E\right)
    \left(\epsilon p_2'd_2^*-p_1'd_1^*\right),
  \end{align*}
  which is clearly negative whenever $\epsilon p_2'd_2^*<p_1'd_1^*$.
  In the latter case the price sensitivity~$p'_1$ at time~$1$ is
  sufficiently high that the decrease in consumer surplus at this time
  as a result the store buying outweighs the increase in consumer
  surplus at time~$2$ as a result of the store selling.  Sioshanshi
  \cite{Sio2014} gives similar examples of cases where storage reduces
  \textit{social} welfare, defined as a sum of consumer surplus,
  producer surplus and the store's profit.
\end{example}

\begin{remark}
  In the case of linearised prices of the form~(\ref{eq:1})---so that
  the cost functions $C_t$ are quadratic with a discontinuity of slope
  at $0$---we can deduce some further results.  In particular, both
  the market clearing price at each time~$t$, given by
  $p_t+p'_th(x_t(S^*))$, and the consumer surplus, given by the
  approximation~\eqref{eq:10}, are then piecewise linear functions of
  the capacity of the store.  This follows from the observations of
  Section~\ref{sec:sensitivity-results}, in particular from the
  condition~(ii) of Proposition~\ref{prop:1}, which shows that the
  vector of optimised levels~$S^*$ is a piecewise linear function of
  the vector~$\mu^*.$ As the capacity $E$ is varied at a single time
  $t_0,$ the discussion of Section~\ref{sec:sensitivity-results}
  therefore implies that $\mu^*$ must vary piecewise linearly with
  respect to this variation, between the times $t_1$ and $t_2$
  identified above.
\end{remark}



\subsection{Example}
\label{sec:example}

We consider an example based on half-hourly market electricity prices
in Great Britain throughout the year 2014.  These are the so-called
Market Index Prices as supplied by Elexon~\cite{Elexon}, who are
responsible for operating the Balancing and Settlement Code for the
Great Britain wholesale electricity market.  These are considered to
form a good approximation to real-time spot prices.

These prices, given in units of pounds per megawatt-hour, exhibit an
approximately cyclical behaviour, being high by day and low by night
and, apart from this, are reasonably consistent throughout the year
except for some mild seasonal variation, notably that prices are
slightly lower during the summer months.

We take the price functions~$p_t$ to be given by
\begin{equation}
  \label{eq:11}
  p_t(x)=\bar p_t\left(1+\lambda x\right),
\end{equation}
where the $\bar p_t$, $t=1,\dots T$, are proportional to the spot
market prices referred to above.  These price functions are a special
case of the linear functions~\eqref{eq:1}, in which the price
sensitivity $p'_t$ is proportional to $\bar p_t$, an assumption which
is in many circumstances very plausible; the constant of
proportionality $\lambda\ge0$ may then be considered a \emph{market
  impact factor}.  The relation~\eqref{eq:11} also implies that
$\lambda$ should be chosen in proportion to the physical size of the
unit of energy: for any $k>0$, the substitution of $x/k$ for $x$ and
$k\lambda$ for $\lambda$ leaves~\eqref{eq:11} unchanged.  We therefore
find it convenient to consider a store whose nominal dimensions are
generally held constant, and to allow $\lambda$ to vary: the market
impact as $\lambda$ is increased is equivalent to that which occurs
when $\lambda$ is held constant and the dimensions of the store are
allowed to increase instead.  The case $\lambda=0$ corresponds to no
market impact (appropriate to a relatively small store).  Clearly also
there exists $\lambda_{\max}$ such that, for
$\lambda\ge\lambda_{\max}$ both the rate and capacity constraints of
the store cease to be binding, so that for all
$\lambda\ge\lambda_{\max}$ the market impact of the store is the same,
and---again by the above scaling argument---may be regarded as that of
an unconstrained store.

We take a storage facility with common input and output rate
constraints and, without loss of generality, we choose units of energy
such that, on the half-hourly timescale of the spot-price data, this
common rate constraint is equal to~$1$ unit per half-hour.  For the
numerical example, we in general take the capacity of the store to be
given by $E=10$ units; this corresponds to the assumption that the
store empties or fills in a total time of $5$ hours.  This capacity to
rate ratio is fairly typical, being in particular close to that for
the Dinorwig pumped storage facility in Snowdonia~\cite{Din}
(though the charge time and discharge times for Dinorwig are
approximately 7 hours and 5 hours respectively).  We in general take
the round-trip efficiency as $\epsilon=0.75$, which is again
comparable to that of Dinorwig.  Thus the effect on market prices
given by varying $\lambda$, which we discuss below, corresponds to
that considering the effect on the market of rescaled versions of a
facility not too dissimilar from Dinorwig.  We also investigate
briefly the effect of varying the capacity constraint~$E$ relative to
the unit rate constraint, and the effect of varying the round-trip
efficiency~$\epsilon$.

Figure~\ref{fig:lam} shows, for $E=10$ and $\epsilon=0.75$, the effect
of varying the market impact~$\lambda$.  The control of the store is
optimised, as previously discussed, over the entire one-year period
for which price data are available (with the store starting and
finishing empty).  For relatively small values of $\lambda$ the store
fills and empties (or nearly so) on a daily cycle, as it takes
advantage of low nighttime and high daytime prices.  For significantly
larger values of the market impact factor~$\lambda$, the store no
longer fills and empties on a daily basis (as this factor now erodes
the day-night price differential as the volume traded increases);
however, the level of the store may gradually vary on a much longer
time scale as the store remains able to take advantage of even modest
seasonal price variations.  The first six panels of panels of
Figure~\ref{fig:lam} show plots of the time-varying levels of the
store against selected values of~$\lambda$.  For $\lambda=0$,
$\lambda=0.1$ and $\lambda=0.5$ the level of the store is plotted
against time for the first two weeks of the year, while for
$\lambda=1$, $\lambda=5$ and $\lambda=10$ the level of the store is
plotted against time for the entire year.  The final panel of
Figure~\ref{fig:lam} shows a plot against time---for the first two
weeks of the year---of the market clearing price corresponding to
$\lambda=0$, $\lambda=0.5$, and $\lambda=10$.  The erosion of the
day/night price differential as $\lambda$ increases is clearly seen.

\begin{figure}[!ht]
  \centering
  \includegraphics[scale=0.73]{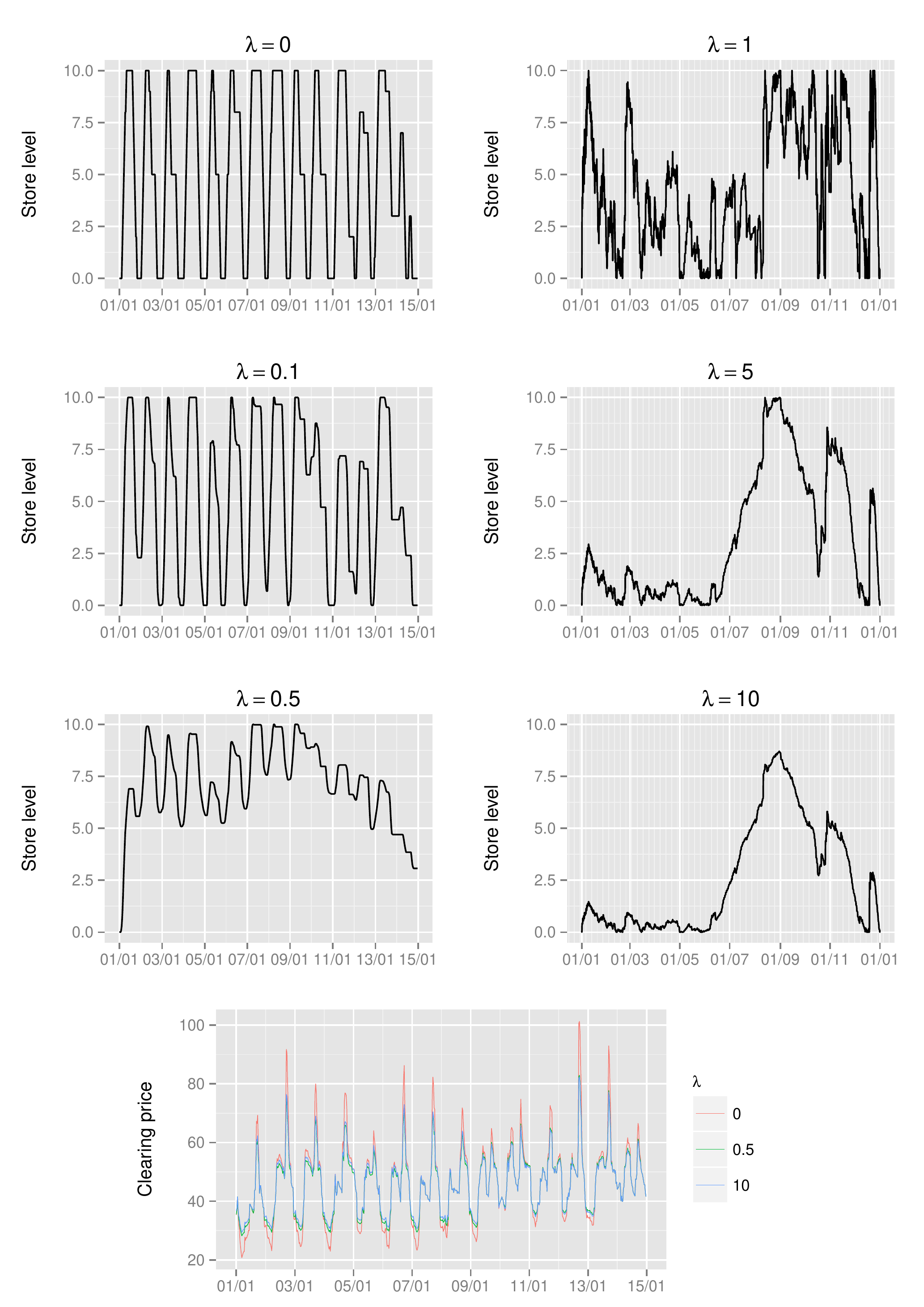}
  \caption{Single store: behaviour of store level and market clearing
    price (see text for a discussion of units) as the market impact
    factor~$\lambda$ is varied---equivalently the size of the store is
    varied.}
  \label{fig:lam}
\end{figure}

For values of $\lambda$ greater than $\lambda_{\max}\approx23$ the
volumes traded are such that neither the rate nor the capacity
constraints of the store are binding, so that for
$\lambda>\lambda_{\max}$ volumes traded are simply proportional to
$1/\lambda$.

The left panels of Figure~\ref{fig:capeff} show the effect on store
level---over the entire year---of decreasing the efficiency of the
store from $\epsilon=0.75$ (for which the store level is shown in red)
to $\epsilon=0.65$ (for which the store level is shown in blue), for
each of the larger values of $\lambda$ considered above, i.e.\ for
$\lambda=1$, $\lambda=5$ and $\lambda=10$.  The capacity of the store
is here kept at our base level of $E=10$.  Decreasing the efficiency
of the store reduces its ability to exploit the daily cycle of price
variation in a manner not dissimilar from that of increasing the
market impact~$\lambda$, so that again the volumes of daily trading
are reduced, while the store may continue to exploit its full capacity
on a seasonal basis---again for a very modest further gain.  We remark
also that reducing the efficiency of the store reduces the extent to
which it is able to smooth prices.

The right panels of Figure~\ref{fig:capeff} similarly show the
effect---again over the entire year and for the same three values of
$\lambda$---of increasing the capacity of the store from $E=10$ (for
which the store level is shown in red) to $E=20$ (for which the store
level is shown in blue).  The round trip efficiency of the store is
kept at $\epsilon=0.75$.  In each case it is seen that the daily
variation in the level of the store remains much the same as $E$ is
increased (since for these levels of $\lambda$ there is too much
market impact to make profitable greater volumes of daily trading,
except on occasions in the case $\lambda=1$).  However, for
$\lambda=1$ and for $\lambda=5$, as $E$ is increased the store is able
to make some (very modest) additional profit by varying slowly
throughout the year the general level at which it operates.  For
$\lambda=10$ the market impact is so great that the capacity
constraint $E=10$---and so also the capacity constraint $E=20$---is
never binding, so that in this case the increase in the capacity has
no effect.

\begin{figure}[!ht]
  \centering
  \includegraphics[scale=0.75]{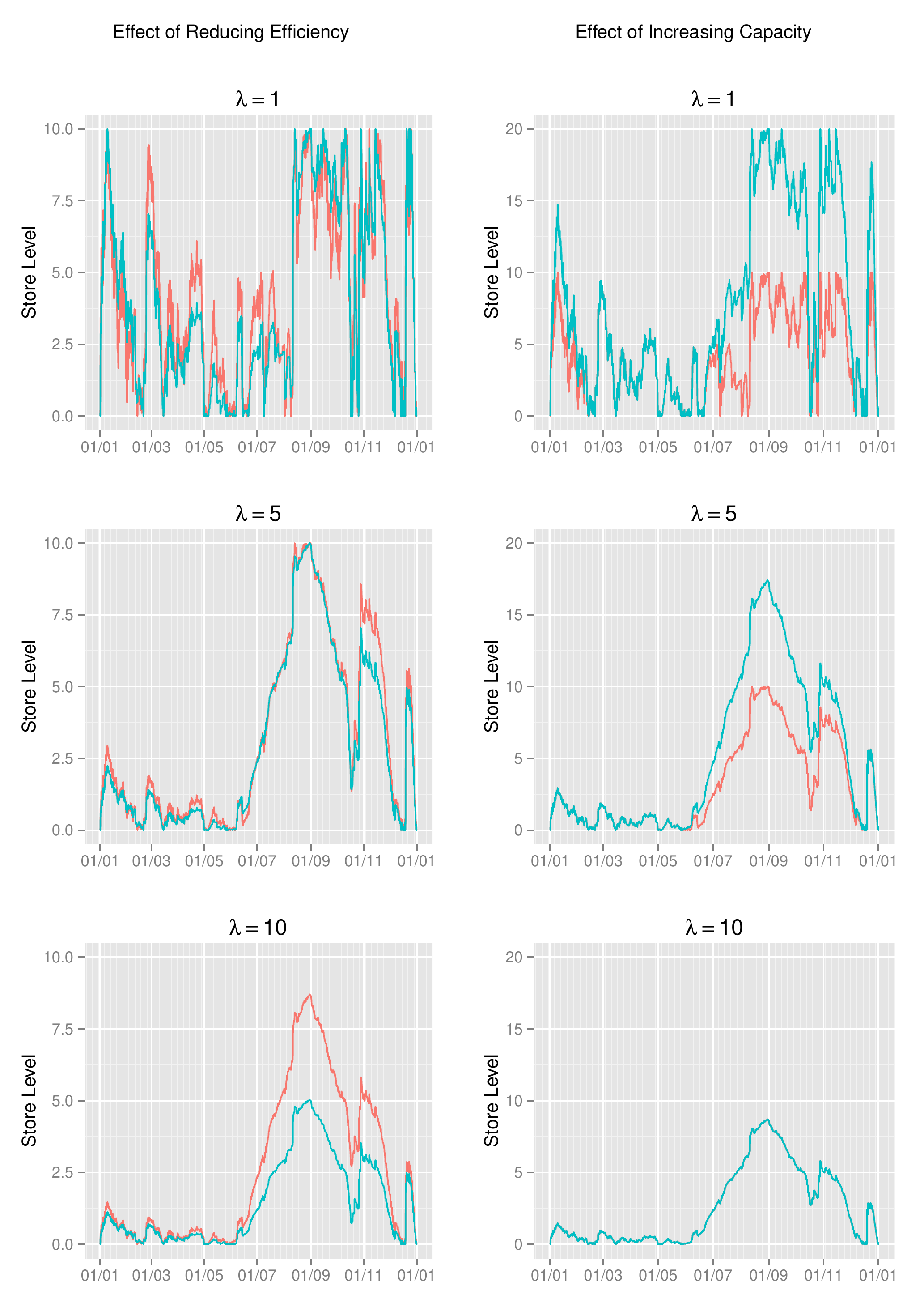}
  \caption{Single store: behaviour of the store level as the
    round-trip efficiency~$\epsilon$ is varied from $0.75$ to $0.65$
    (left panels) and as the capacity~$E$ is varied from $10$ to $20$
    (right panels), in each case for $\lambda=1$, $\lambda=5$ and
    $\lambda=10$.}
  \label{fig:capeff}
\end{figure}

\section{Competing stores in a market}
\label{sec:n-competing-stores}

In this section we discuss $n$ competing stores in a market, where it
is assumed that the objective of each store is to maximise its own
profit.  The optimal strategy of each store in general depends on the
activities of the remainder, and what happens depends on the extent to
which there is cooperation between the stores.  In the absence of any
such cooperation we might reasonably expect some form of convergence
over time to a Nash equilibrium, in which each store's strategy is
optimal given those of the others.  We first discuss briefly the
cooperative solution, primarily for the purpose of reference, before
considering the effect of market competition.

\subsection{The cooperative solution}
\label{sec:coop-solut}

Here the stores behave cooperatively so as to minimise their combined
cost
\begin{equation}
  \label{eq:12}
  \sum_{i=1}^n\sum_{t=1}^T h_i(x_t(S_i)) p_t\bigg(\sum_{k=1}^nh_k(x_t(S_k))\bigg),
\end{equation}
subject to the capacity constraints \eqref{eq:5} and rate
constraints~\eqref{eq:6}.  This is a generalisation to higher
dimensions of the single-store problem, and we do not discuss a
detailed solution here.  Note, however, that an iterative approach to
the determination of a solution may be possible.  Under our
assumptions on the price functions, the function of $S_1,\dots,S_n$
given by~\eqref{eq:12} is convex.  For any store~$j$, given the
levels~$S_i$ of the remaining stores $i\neq j$, the minimisation
of~\eqref{eq:12} in $S_j$ (subject to the above constraints) is an
instance of the single-store problem discussed in
Section~\ref{sec:single-store-market}---with cost functions modified
so as reflect the overall cost to all the stores of the actions of the
store~$j$.  This leads to the obvious iterative algorithm in
which~\eqref{eq:12} is minimised in $S_j$ for successive stores~$j$
until convergence is achieved.  However, the limiting value of
$(S_1,\dots,S_n)$, while frequently a global minimum,  is not
guaranteed to be so.

In the case where the stores have identical efficiencies one might
also consider the simplified single-store problem in which the
individual capacity constraints are summed and individual rate
constraints are summed.  If the solution to this, suitably divided
between the stores (i.e.\ with a fraction $\kappa_i$ of the optimal
flow assigned to each store $i$, where $\sum_{i=1}^n\kappa_i=1$), is
feasible for the original problem then it solves that problem.  One
case where this is true is where additionally the ratios $E_j/P_{Ij}$
and $E_j/P_{Oj}$ are the same for all stores~$j$; the solution to the
simplified single-store problem is then just divided among the stores
in proportion to their capacities to give the cooperative solution to
the $n$-store problem.

The impact of the stores on market prices and consumer surplus is
determined in a manner entirely analogous to that of
Section~\ref{sec:impact-store-prices}. 

\subsection{The competitive solution}
\label{sec:competitive-solution}

When stores compete there needs to be a mechanism whereby a clearing
price in the market is determined.  Here there are in principle
various possibilities according to the rules under which the market is
to operate.  We discuss some of these in
Section~\ref{sec:poss-models-comp}, making a formal link with the
various classical modes of competition in simple ``single shot in
time'' markets for balancing supply and demand in situations where
storage does not operate.  In the succeeding sections we look in
particular at what happens when stores bid \emph{quantities}, i.e.\ at
Cournot models of competition.

\subsubsection{Possible models of competition}
\label{sec:poss-models-comp}

Consider first the case $T=2$, and assume for simplicity that the
stores are perfectly efficient.  Suppose that each store $k$ buys and
then sells $q_k$ (positive or negative), and that this results in a
price differential of $p$ (the clearing price at time 2 less that at
time 1) so that each store $k$ makes a profit $pq_k$.  We might
consider the situation where, in a precise analogue of the
\emph{supply function bidding} of Klemperer and Meyer~\cite{KM}, each
store $k$ declares, for each possible value of $p$, a value $S_k(p)$
which it contracts to buy at time 1 and then sell at time 2 if the
clearing prices at those times are set such that the price
differential is $p$.  If each ``supply function'' $S_k$ is a
nondecreasing function of $p$, the auctioneer then chooses the
clearing prices $p_1$ and $p_2$ such that
\begin{align}
  R_1(p_1) & = \sum_k S_k(p) \label{eq:13}\\
  R_2(p_2) & = -\sum_k S_k(p) \label{eq:14}\\
  p_2 - p_1 & = p \label{eq:15},
\end{align}
where, for $t=1,2$, $R_t$ is the residual supply function defined in
Section~\ref{sec:model}.

Assume that the residual supply functions~$R_t$ are strictly
increasing.  The system of equations~\eqref{eq:13}--\eqref{eq:15} is
easily seen to have a unique solution (provided the supply
functions~$S_k$ are such that one exists at all): suppose that, as $p$
varies, $p_1$ and $p_2$ are chosen as functions of $p$ such that
$p_2-p_1=p$ and $R_2(p_2)=-R_1(p_1)$; then, as $p$ increases,
$\sum_k S_k(p)$ increases while $R_1(p_1)$ decreases, and at the
unique value of $p$ such that we have equality between these two
quantities the above system of equations~\eqref{eq:13}--\eqref{eq:15}
is satisfied.

Mathematically, this situation is no different from that of the
classical ``one-shot'' supply function bidding of Klemperer and
Meyer~\cite{KM}.  This was further studied in applications to energy
markets by Green and Newbery~\cite{GN} and by Bolle~\cite{Bol}, and
subsequently by many others---see in particular Anderson and
Philpott~\cite{AP2002}, and the very comprehensive review by Holmberg
and Newbery~\cite{HN}.  In such supply function bidding suppliers (for
example, electricity generators) submit nondecreasing supply functions
to a market in which there is also a nonincreasing demand function,
the market clearing price being that at which the total supply equals
the total demand.  The behaviour of such supply function bidding is
considered in~\cite{KM}, in particular the existence and uniqueness of
Nash equilibria.  In practice one might well wish to restrict the
allowable sets of supply functions which suppliers are permitted to
bid (see Johari and Tsitsiklis~\cite{JT}) so as to achieve
economically acceptable solutions.  Two extreme cases are the
classical situations where either suppliers may bid \emph{prices} at
which they are prepared to supply any amount of the commodity to be
traded---corresponding to ``vertical'' supply functions and leading to
a \emph{Bertrand equilibrium}, or else suppliers may bid
\emph{quantities} which they are prepared to supply at whatever price
clears the market---corresponding to horizontal supply functions and
leading to a \emph{Cournot equilibrium}.  In the former case, at the
Nash equilibrium, the one supplier who is able to offer the lowest
price corners the market (and, in the case of symmetric suppliers,
makes zero profit).  In the latter case, modest profits are to be
made, but the total profit of all the suppliers decreases rapidly as
their number increases---as is seen also in our results for storage
models below.

It is difficult to find a sensible and realistic way of extending the
concept of general supply function bidding to competition amongst
stores operating over more than two time periods---the dimensionality
of the space in which the supply functions would then live is too
high, and the set of possibilities for market clearing mechanisms is
too complex.  Nor is it realistic to consider the situation where
stores bids prices, since as indicated above, profits are then
typically too small for stores to be able to recover their set-up
costs.  We therefore restrict our attention to the case where stores
bid quantities---as seems to be the case where elsewhere in the
literature market competition between stores is considered (see, for
example, Sioshansi~\cite{Sio2014}).  Here the Nash equilibria are
Cournot equilibria and the profits made by the stores at such
equilibria may be expected to provide reasonable upper bounds on such
profits as might be made in practice---for a review in the context of
``one-shot in time'' markets again see Holmberg and Newbery~\cite{HN}.

\subsubsection{General convex cost functions}
\label{sec:general-convex-cost}

We consider stores bidding quantities as above and look for Nash
(Cournot) equilibria.  A (pure strategy) Nash equilibrium is then a
set of vectors $(S_1,\dots,S_n)$ such that the strategy~$S_j$ of each
store~$j$ (i.e.\ the vector of quantities traded over time by that
store) is optimal given the strategies $S_i$, $i\neq j$, of the
remaining stores; thus the vector~$S_j$ solves the optimisation
problem $\mathbf{P}_j$ (defined by the remaining vectors $S_i$,
$i\neq j$) of Section~\ref{sec:model}.  Equivalently, at a Nash
equilibrium, the vector~$S_j$ minimises the function~\eqref{eq:12}
subject to the constraints~\eqref{eq:5} and \eqref{eq:6} and with the
values of the vectors $S_i$, $i\neq j$, held constant.

Broadly what happens at such an equilibrium is that stores will buy
and sell more than at the cooperative solution, since each store gains
for itself the benefits of so doing, while the corresponding costs are
shared out among all stores.  In particular consider $n$ identical
competing stores with nonbinding capacity and rate constraints, but
with common given starting and finishing levels; for the moment assume
further that they have round-trip efficiencies $\epsilon=1$, and that
the price functions~$p_t$ are differentiable.  For each store~$k$ and
for each time~$t$, write $x_{kt}=x_t(S_k)$.  At the symmetric Nash
equilibrium, and for each store~$j$, there are equalised over time~$t$
the partial derivatives with respect to $x_{jt}$ of the functions
$x_{jt}p_t\bigl(\sum_{k=1}^nx_{kt}\bigr)$.  (For $n=1$ these are just
the derivatives of the cost functions seen by the store.)  It is
straightforward to show that the convexity of these functions ensures
that in general unit prices received by the store at those times when
it is selling are higher than unit prices paid by the store at those
times when it is buying, and so the store is able to make a strictly
positive profit.  However, as $n$ becomes large the above partial
derivatives tend to the price functions
$p_t\bigl(\sum_{k=1}^nx_{kt}\bigr)$ so that, in the limit as
$n\to\infty$, prices become equalised over time and the stores no
longer make any profit.  As earlier, the intuitive explanation is that
in the limit the stores become price takers and any individual store
is able to exploit any inequality over time in market clearing prices
so as to increase its profit.  Thus at the Nash equilibrium market
clearing prices are equalised over time and stores are unable to make
any profit.  It is easy to see that essentially the same result holds
when round-trip efficiencies are less than one.  In the case of
linearised price functions we quantify this result further in
Theorem~\ref{thm:ucs}.

More generally the impact on prices of competition between stores, in
comparison to the cooperative solution, is to further reduce the price
variation between the different times over which the stores operate.
Arguing as in Section~\ref{sec:impact-store-prices}, one would
typically expect such increased competition to lead to a further
increase in consumer surplus.  However, again this need not always be
the case.

\paragraph{Existence and uniqueness of Nash equilibria.}

The following result shows the existence of a (pure strategy) Nash
equilibrium.

\begin{theorem}\label{thm:ne_exists}
  Under the given assumptions on the price functions~$p_t$, there
  exists at least one Nash equilibrium.
\end{theorem}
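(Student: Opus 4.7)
The plan is to obtain existence from a standard fixed-point argument, using Kakutani's theorem applied to the joint best-response correspondence (equivalently, one can invoke the Debreu--Glicksberg--Fan theorem directly). For each store~$j$, I would first fix the strategy space
\[
\mathcal{S}_j=\bigl\{S_j\in\R^{T+1}:S_{j0}=S^*_{j0},\;S_{jT}=S^*_{jT},\;0\le S_{jt}\le E_j,\;x_t(S_j)\in\X_j\bigr\}.
\]
This set is a nonempty, closed, bounded and convex polytope in $\R^{T+1}$ (nonemptiness is implicit in the problem being well posed---i.e.\ the prescribed endpoints are reachable under the rate constraints---and otherwise $\mathbf{P}_j$ would have no feasible point at all). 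Hence each $\mathcal{S}_j$ is compact and convex, and so is the product $\mathcal{S}=\prod_j\mathcal{S}_j$.

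Next I would check the two analytic hypotheses needed to apply the fixed-point theorem to the best-response correspondence
\[
B_j(S_{-j})=\arg\min_{S_j\in\mathcal{S}_j}\sum_{t=1}^{T} C_{jt}\bigl(x_t(S_j);\,x_t(S_i),\,i\neq j\bigr),\qquad B(S)=\prod_j B_j(S_{-j}).
\]
Joint continuity of the total cost follows because each $h_i$ is continuous and piecewise linear, each $p_t$ is continuous on the bounded interval of interest, and $x_t$ is linear in $S$. Convexity of $S_j\mapsto\sum_t C_{jt}(x_t(S_j);x_t(S_i),i\neq j)$ for fixed $S_{-j}$ follows from the convexity, stated just after \eqref{eq:3}, of each $C_{jt}(\,\cdot\,;x_{it},i\neq j)$ in its principal argument, together with the linearity of $x_t$ in $S_j$. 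These two facts give exactly what is needed: by Berge's maximum theorem $B_j$ is upper hemicontinuous with nonempty, compact values, and convexity of the cost in $S_j$ makes these values convex as well.

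With $B:\mathcal{S}\rightrightarrows\mathcal{S}$ upper hemicontinuous, nonempty-, convex- and compact-valued, and $\mathcal{S}$ itself nonempty, convex and compact, Kakutani's fixed-point theorem furnishes some $S^*\in\mathcal{S}$ with $S^*\in B(S^*)$. Unpacking the definition, this means that for every $j$ the vector $S_j^*$ solves $\mathbf{P}_j$ given the choices $S_i^*$ of the remaining stores, which is precisely the Nash equilibrium condition. The most delicate step is the convexity of $C_{jt}$ in its own argument, but this has already been incorporated into the model's standing assumptions on $p_t$ (the convexity and monotonicity of $x\mapsto xp_t(x+k)$), so the argument goes through with no extra hypotheses; no smoothness, strict convexity, or uniqueness of best responses is required at this stage.
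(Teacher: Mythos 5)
Your argument is correct, and it is a close cousin of the paper's proof---both rest on a fixed-point theorem applied to the best-response map over the compact convex product of the polytopal strategy sets---but it differs in one genuine technical respect. The paper works with a single-valued best-response function: it first assumes the price functions make the costs \emph{strictly} convex, so that each $f_j(S)$ is unique, proves continuity of $f$ by a uniform-convergence argument, and applies Brouwer; it then removes the strict-convexity assumption by perturbing the cost functions, extracting a convergent subsequence of the perturbed equilibria, and checking that the limit is an equilibrium. You instead allow the best responses to be set-valued and apply Kakutani (equivalently the Debreu--Glicksberg--Fan theorem), using Berge's maximum theorem for upper hemicontinuity and convexity of each $C_{jt}(\,\cdot\,;x_{it},i\neq j)$ composed with the linear map $S_j\mapsto x_t(S_j)$ for convex values. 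This buys you a one-step proof under plain convexity, with no perturbation-and-limit argument; the cost is the heavier fixed-point machinery, whereas the paper gets by with Brouwer at the price of the extra approximation step. The only points worth being explicit about are (i) joint continuity of the objective, which needs continuity of each $p_t$---this does follow from the standing assumption that $x\mapsto xp_t(x+k)$ is convex for every $k$, since convexity forces continuity on the interior of the relevant interval, and the paper's own proof relies on the same fact implicitly---and (ii) nonemptiness of the feasible sets, which you correctly note is already implicit in the problems $\mathbf{P}_j$ being well posed.
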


\begin{proof}
  The assumptions on the price functions~$p_t$ guarantee convexity of
  the cost functions defined by~\eqref{eq:4}.  We assume first that
  the price functions are such that these cost functions are strictly
  convex.  Write $S = (S_1,..., S_n)$ where each $S_j$ is the strategy
  over time of store $j$.  Let $\mathcal{S}$ be the set of all
  possible $S$; note that $\mathcal{S}$ is convex and compact.  Define
  a function $f:\mathcal{S}\rightarrow\mathcal{S}$ by
  $f(S)=(f_1(S),\dots,f_n(S))$ where each $f_j(S)$ minimises the
  function $G_j(\,\cdot\,;\,S_1,\dots,S_{j-1},S_{j+1},\dots,S_n)$
  given by \eqref{eq:4} subject to the constraints~\eqref{eq:5} and
  \eqref{eq:6}, i.e.\ $f_j(S)$ is the best response of store $j$ to
  $(S_1,\dots,S_{j-1},S_{j+1},\dots,S_n)$.  It follows from the strict
  convexity assumption that each $f_j(S)$ is uniquely defined.

  Now suppose that a sequence $(S^{(n)})$ in $\mathcal{S}$ is such
  that $S^{(n)}\to S$ as $n\to\infty$.  Then, for each~$j$, the
  functions
  $G_j(\,\cdot\,;\,S^{(n)}_1,\dots,S^{(n)}_{j-1},S^{(n)}_{j+1},\dots,S^{(n)}_n)$
  (of $S_j$) converge uniformly to the continuous and strictly convex
  function $G_j(\,\cdot\,;\,S_1,\dots,S_{j-1},S_{j+1},\dots,S_n)$, so
  that also $f_j(S^{(n)})\rightarrow f_j(S)$.  Hence the function $f$
  is itself continuous.  Thus by the Brouwer fixed point theorem there
  exists $S = f(S)$, which by definition is a (Cournot) Nash
  equilibrium.

  In the case where the price functions~$p_t$ are such that the cost
  functions given by \eqref{eq:4} are convex but not strictly so, we
  may consider a sequence of modifications to the former, tending to
  zero and such that we do have strict convexity of the corresponding
  cost functions.  Compactness ensures that the corresponding Nash
  equilibria converge, at least in a subsequence, to a limit which
  straightforward continuity arguments show to be a Nash equilibrium
  for the problem defined by the unmodified price functions.
\end{proof}

In general the uniqueness of any Nash equilibrium is unclear.
However, we show in Section~\ref{sec:quadr-cost-funct} that, under a
linear approximation to the price functions, the Nash equilibrium
\emph{is} unique.

The proof of Theorem~\ref{thm:ne_exists} also suggests an iterative
algorithm to identify possible Nash equilibria---analogous to the
algorithm suggested in Section~\ref{sec:coop-solut}.  Given any $S$
the determination of each $f_j(S)$ introduced in the above proof
requires only the solution of single-store optimisation problem, which
may be achieved as described in, for example,~\cite{CFGZ}).  Hence,
starting with any $S^{(0)}$, we may construct a sequence
$\{S^{(n)}\}_{n\ge0}$ such that $S^{(n)}=f(S^{(n-1)})$.  Then, as in
the above proof, any limit~$S$ of the sequence $\{S^{(n)}\}$ satisfies
$S=f(S)$ and hence constitutes a Nash equilibrium.  Different starting
points $S^{(0)}$ may be tried, but, in the case of nonuniqueness, there
is of course no guarantee that all Nash equilibria will be found.

Even under our given assumptions on the price functions~$p_t$
the general characterisation of Nash equilibria seems difficult.
The following theorem gives a monotonicity result.

\begin{theorem}
  \label{thm:op}
  Consider $n$ competing stores with identical rate constraints and
  efficiencies and whose starting levels and finishing levels are
  ordered by their capacity constraints.
  Then, at any Nash equilibrium~$S^*=(S^*_1,\dots,S^*_n)$, the levels
  of the stores are at all times ordered by their capacity
  constraints.
\end{theorem}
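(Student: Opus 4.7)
The plan is a proof by contradiction, using a pairwise rearrangement of the trajectories of any two stores whose capacity‑ordering is violated.

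\textbf{Setup.} Relabel so that $E_1\le\cdots\le E_n$; the hypothesis then gives $S^*_{j0}\le S^*_{k0}$ and $S^*_{jT}\le S^*_{kT}$ whenever $j<k$. Suppose, for contradiction, that $S^*_{jt^*}>S^*_{kt^*}$ for some pair $j<k$ and some interior time $t^*$. Since the ordering holds at the two endpoints, there exist indices $0\le t_1<t^*<t_2\le T$ with $S^*_{jt}>S^*_{kt}$ for every $t\in\{t_1+1,\dots,t_2-1\}$ and $S^*_{j,t_i}\le S^*_{k,t_i}$ for $i=1,2$. Define the coordinatewise sorted trajectories
\begin{equation*}
  \tilde S_{jt}=\min(S^*_{jt},S^*_{kt}),\qquad \tilde S_{kt}=\max(S^*_{jt},S^*_{kt}),
\end{equation*}
and set $\tilde S_i=S^*_i$ for the remaining stores. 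Outside the crossing interval $\tilde S$ agrees with $S^*$; inside the interval the trajectories of $j$ and $k$ are swapped.

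\textbf{Feasibility.} I would first show that $\tilde S_j$ is feasible for $\mathbf P_j$ and $\tilde S_k$ is feasible for $\mathbf P_k$. The boundary conditions at $t=0,T$ are unchanged since the crossing interval lies in $\{1,\dots,T-1\}$. Capacity is met because $\tilde S_{jt}\le S^*_{jt}\le E_j$ and $\tilde S_{kt}\le\max(S^*_{jt},S^*_{kt})\le\max(E_j,E_k)=E_k$. For the rate constraints, a short four‑case analysis on the orderings of $(S^*_{j,t-1},S^*_{k,t-1})$ and $(S^*_{jt},S^*_{kt})$ suffices; the only nontrivial cases are the ``crossing'' transitions at $t=t_1+1$ and $t=t_2$, where one bounds the new increments using a telescoping of the known memberships $x_t(S^*_j),x_t(S^*_k)\in X$ with the inequalities between the store levels at the endpoints. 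The identical‑rate ($X_j=X_k$) and identical‑efficiency ($h_j=h_k$) hypotheses are crucial here.

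\textbf{Contradiction.} By Nash optimality, $C_j(\tilde S_j,S^*_{-j})\ge C^*_j$ and $C_k(\tilde S_k,S^*_{-k})\ge C^*_k$. The plan is then to establish the reverse, strict, inequality on the sum whenever the crossing interval is nonempty. Two observations drive this. First, the aggregate change in level is preserved: $x_t(\tilde S_j)+x_t(\tilde S_k)=x_t(S^*_j)+x_t(S^*_k)$ for every $t$. Second, by the convexity of the common function $h$, one has $h(x_t(\tilde S_j))+h(x_t(\tilde S_k))\le h(x_t(S^*_j))+h(x_t(S^*_k))$ at every $t$, strictly so at the boundary transitions where the trajectories genuinely cross. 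Combined with the monotonicity of $z\mapsto zp_t(z+k)$ (from the standing assumptions on $p_t$), this produces a strict decrease in the combined cost of stores $j$ and $k$. I would then convert this combined bound into the needed bound on $C_j(\tilde S_j,S^*_{-j})+C_k(\tilde S_k,S^*_{-k})$ by a convexity/subgradient inequality, using the Lagrange multipliers $\mu^*_j,\mu^*_k$ and the complementary slackness conditions (iii) of Proposition~\ref{prop:1} to handle terms where the capacity is binding.

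The principal obstacle is this last conversion: the combined‑cost inequality lives in the \emph{swap profile}, whereas the Nash inequalities compare $\tilde S_j$ against $S^*_{-j}$ (and $\tilde S_k$ against $S^*_{-k}$), with the other store's strategy \emph{not} swapped. Bridging the two requires careful bookkeeping of the ``boundary'' terms at times $t_1+1$ and $t_2$, where the new individual increments $x_t(\tilde S_j)$ and $x_t(\tilde S_k)$ are not merely a permutation of $x_t(S^*_j)$ and $x_t(S^*_k)$, so that the swap is genuinely informative rather than a relabeling.
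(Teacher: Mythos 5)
Your feasibility step is fine, and in fact simpler than you suggest: since $\min(a,b)-\min(c,d)$ and $\max(a,b)-\max(c,d)$ both lie between $\min(a-c,\,b-d)$ and $\max(a-c,\,b-d)$, the sorted trajectories automatically respect the common rate constraint at every time, and the capacity and endpoint constraints are as you say. The genuine gap is exactly the step you flag as the ``principal obstacle'', and it is not bookkeeping --- it is where the argument breaks. The Nash property only gives weak inequalities for \emph{unilateral} deviations, so you need the strict inequality $C_j(\tilde S_j,S^*_{-j})+C_k(\tilde S_k,S^*_{-k})<C_j(S^*_j,S^*_{-j})+C_k(S^*_k,S^*_{-k})$, whereas your convexity argument controls the \emph{jointly} swapped profile. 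At every non-crossing time inside the swap interval the two unilateral deviations are pure exchanges of increments, so store $j$'s deviation sees the price argument $2h(x_t(S^*_k))+K_t$ and store $k$'s sees $2h(x_t(S^*_j))+K_t$ (with $K_t$ the contribution of the remaining stores), not the equilibrium argument $h(x_t(S^*_j))+h(x_t(S^*_k))+K_t$; if at such a time $j$ is buying and $k$ is selling, both deviation terms are \emph{strictly larger} than the corresponding equilibrium terms (the seller faces a lower price multiplied by a negative quantity, the buyer a higher price multiplied by a positive quantity), so the sum you need to be small can exceed the equilibrium cost, and there is no reason the gain at the two crossing transitions dominates. Moreover, even the joint combined-cost decrease need not be strict: $h$ is piecewise linear, so if at a crossing transition all four increments lie on the same side of $0$ then $h(x_t(\tilde S_j))+h(x_t(\tilde S_k))=h(x_t(S^*_j))+h(x_t(S^*_k))$ and nothing strictly improves --- and without strictness there is no contradiction. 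Nor can you route the comparison through a common objective minimised at equilibrium: that equivalence (Theorem~\ref{thm:une}) holds only for the linearised prices, while the theorem is stated for general price functions.

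The paper avoids deviations altogether and argues through the multiplier characterisation of Proposition~\ref{prop:1}, which is the concrete form of the ``conversion'' you were hoping for. With identical rate constraints and efficiencies, condition~(ii) of that proposition gives, at each time, the equivalence $\mu^*_{it}\ge\mu^*_{jt}\iff x_t(S^*_i)\ge x_t(S^*_j)$. Supposing the ordering fails, one takes $i,j$ with $E_i<E_j$ and a time $t_0$ with $x_{t_0}(S^*_i)>x_{t_0}(S^*_j)$ and $S^*_{it_0}>S^*_{jt_0}$, and then propagates forward: since $S^*_{it'}>0$ and $S^*_{jt'}<E_j$, the complementary slackness conditions~\eqref{eq:7} give $\mu^*_{i,t'+1}\ge\mu^*_{it'}\ge\mu^*_{jt'}\ge\mu^*_{j,t'+1}$, hence $x_{t'+1}(S^*_i)\ge x_{t'+1}(S^*_j)$ and $S^*_{i,t'+1}>S^*_{j,t'+1}$, so by induction the gap persists to time $T$, contradicting the assumed ordering of the finishing levels. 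If you want to salvage a rearrangement-style proof you would have to produce deviations that are individually profitable (or cost-neutral with a strict total improvement) for each store separately; as it stands your proposal does not do this.
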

\begin{proof}
  Let $(\mu^*_1,\dots,\mu^*_n)$ be the set of vectors (Lagrange
  multipliers) associated with the Nash equilibrium
  $S^*=(S^*_1,\dots,S^*_n)$ as defined by Proposition~\ref{prop:1}.
  It follows from (ii) of that proposition that, for any $t$, and any
  $i$, $j$,
  \begin{equation}
    \label{eq:16}
    \mu^*_{it} \ge \mu^*_{jt}
    \quad \Longleftrightarrow \quad
    x_t(S^*_i) \ge x_t(S^*_j). 
  \end{equation}
  Suppose now that the assertion of the theorem is false.  Then there
  exist $i$, $j$ with $E_i<E_j$ and some $t_0$ such that
  \begin{equation}
    \label{eq:17}
    x_{t_0}(S^*_i) > x_{t_0}(S^*_j), \qquad S^*_{it_0} > S^*_{jt_0}.
  \end{equation}
  It now follows by induction that, for all $t'\ge t_0$,
  \begin{equation}
    \label{eq:18}
    x_{t'}(S^*_i) \ge x_{t'}(S^*_j), \qquad S^*_{it'} > S^*_{jt'},
    \qquad \mu^*_{it'} \ge \mu^*_{jt'}.
  \end{equation}
  That \eqref{eq:18} is true for $t'=t_0$ follows from \eqref{eq:16} and
  \eqref{eq:17}.  Suppose now that \eqref{eq:18} is true for some
  particular $t'\ge t_0$.  It then follows from
  Proposition~\ref{prop:1} that the condition $S^*_{it'}>S^*_{jt'}$
  implies $\mu^*_{i,t'+1}\ge\mu^*_{j,t'+1}$; hence, by \eqref{eq:16},
  $x_{t'+1}(S^*_i)\ge x_{t'+1}(S^*_j)$ and so finally
  $S^*_{i,t'+1}>S^*_{j,t'+1}$.  However, this contradicts the
  assumption $S^*_{iT}\le S^*_{jT}$.
\end{proof}

\subsubsection{Quadratic cost functions (i.e.\ linearised price functions)}
\label{sec:quadr-cost-funct}

We can make considerably more progress in the case of the linear
approximation to the price functions given by
equation~\eqref{eq:1}, where we again assume that, for each $t$, we
have $\bar p_t=p_t(0)>0$, $p_t'=p_t'(0)\ge0$, and that the
function~$p_t$ remains positive over the range of possible values of
its argument (so that our standing assumptions on the functions~$p_t$
are satisfied).  This linearisation~\eqref{eq:1} is a reasonable
approximation when storage facilities are sufficiently large as to
have an impact on market prices, but are not so very large as to
require a more sophisticated price function.  The main reason for
greater analytical tractability in this case is that for a set of
vectors $(S_1,\dots,S_n)$ to a be Nash equilibrium is then equivalent
to the requirement that they minimise a given convex function.  In
particular we have the following result.


\begin{theorem}
  \label{thm:une}
  Given the price functions~\eqref{eq:1}, there always exists a unique
  Nash equilibrium.
\end{theorem}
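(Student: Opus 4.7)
The plan is to exhibit a potential function $\Phi(S_1, \dots, S_n)$, strictly convex on the feasible polytope, whose KKT points coincide with the Nash equilibria; strict convexity then forces at most one such point, and combining this with Theorem~\ref{thm:ne_exists} gives a unique Nash equilibrium. Writing $y_{jt} = h_j(x_t(S_j))$ and $Y_t = \sum_j y_{jt}$, I would take
\[
\Phi(S_1, \dots, S_n) = \sum_{t=1}^T \left[\bar p_t Y_t + \frac{p'_t}{2}\left(Y_t^2 + \sum_{j=1}^n y_{jt}^2\right)\right].
\]
A direct differentiation shows that, for each $(j,t)$, $\partial \Phi/\partial S_{jt}$ equals the corresponding partial derivative of store $j$'s total cost $\sum_s C_{js}(x_s(S_j);\, x_s(S_i), i\neq j)$. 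Because the constraints \eqref{eq:5}, \eqref{eq:6} separate over stores, the Lagrangian stationarity and complementary slackness conditions for minimising $\Phi$ over the (convex) product feasible set decompose exactly into the union of the KKT conditions for each store's single-store problem $\mathbf{P}_j$. Hence a feasible $(S_1, \dots, S_n)$ is a Nash equilibrium if and only if it is a KKT point of $\min \Phi$.

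The substantive step is strict convexity of $\Phi$. Fixing $t$ and writing $z_j = x_t(S_j)$, on each closed orthant $\{z \in \R^n : \mathrm{sign}(z_j) = \sigma_j\}$ the functions $h_j$ are linear, and the Hessian of $\Phi_t$ there is $p'_t\, D(I+J)D$, where $J$ is the all-ones matrix and $D$ is diagonal with entries $1$ or $\epsilon_j$ according to $\sigma_j$; since $D$ is invertible and $I+J$ is positive definite, this Hessian is strictly positive definite, so $\Phi_t$ is strictly convex on each orthant. At any kink, where some $z_k = 0$, the one-sided partials $\partial \Phi_t/\partial z_k$ differ by $(1-\epsilon_k)(\bar p_t + p'_t Y_t^{-k})$ with $Y_t^{-k} = \sum_{j\neq k} y_{jt}$, and the standing hypothesis that $p_t > 0$ throughout $[-\sum_j\epsilon_j P_{Oj}, \sum_j P_{Ij}]$ forces the bracket, and hence the jump, to be nonnegative. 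The restriction of $\Phi_t$ to any line is therefore piecewise quadratic, strictly convex on each piece and with only upward derivative jumps at kinks, and so strictly convex globally. Summing over $t$, and using that any nonzero direction $\Delta S$ respecting the fixed boundary values $S^*_{j0}, S^*_{jT}$ must produce $\Delta z_{jt}\neq 0$ for some $(j,t)$, gives strict convexity of $\Phi$ in the free variables.

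Strict convexity on the convex feasible set then admits at most one KKT point, and existence of at least one Nash equilibrium is supplied by Theorem~\ref{thm:ne_exists}, yielding uniqueness. The main delicacy is handling the efficiency-induced kinks of the functions $h_j$, which spoil the $C^1$-regularity of $\Phi$; the orthant-wise positive-definite Hessian together with the nonnegative derivative jumps at the kinks (the latter being where the standing positivity of the price functions crucially enters) does the job. An equivalent and perhaps slicker-looking route is to work throughout in the $y$-coordinates, where $\Phi$ is manifestly a strictly convex quadratic with Hessian $p'_t(I+J)$ at each time, but the orthant argument has the merit of staying within the $S$-variable formulation used throughout the paper.
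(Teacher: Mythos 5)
Your proof follows essentially the same route as the paper: your potential $\Phi$ is exactly the function \eqref{eq:20} of the paper's own argument, with Nash equilibria characterised as its constrained minimisers and uniqueness drawn from strict convexity (the paper takes existence from compactness of the feasible set rather than from Theorem~\ref{thm:ne_exists}, an immaterial difference). Your orthant-wise Hessian computation and the nonnegative derivative jumps at the efficiency kinks simply supply the verification of strict convexity (implicitly assuming $p'_t>0$, as the paper also does) that the paper asserts without detail.
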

\begin{proof}
  It follows from \eqref{eq:1} and \eqref{eq:4} that the requirement
  that a set of vectors $(S_1,\dots,S_n)$ be a Nash equilibrium is
  equivalent to the requirement that, for each store~$j$, given the
  policies $S_i$, $i\neq j$, being operated by the remaining stores,
  the vector $S_j$ minimises the total cost
  \begin{equation}
    \label{eq:19}
    \sum_{t=1}^T h(x_t(S_j)) \bigg(\bar p_t+p'_t\sum_{i=1}^n h(x_t(S_i))\bigg),
  \end{equation}
  subject to the capacity and rate constraints on store $j$ given by
  \eqref{eq:5} and \eqref{eq:6}.  Now note that this is further
  equivalent to the requirement that the set of vectors
  $(S_1,\dots,S_n)$ minimises the strictly convex function
  \begin{equation}
    \label{eq:20}
       \sum_{t=1}^T
    \left[
      \bar p_t \sum_{i=1}^nh_i(x_t(S_i)) + \frac{1}{2}p'_t
      \left(
        \sum_{i=1}^nh_i(x_t(S_i))^2 + \bigg(\sum_{i=1}^nh_i(x_t(S_i))\bigg)^2
      \right)
    \right]
  \end{equation}
  subject to the constraints \eqref{eq:5} and \eqref{eq:6} being
  satisfied for all $j$.  Further since this minimum is also to be
  taken over a compact set, its existence and uniqueness---and hence
  that of the Nash equilibrium---follows.
\end{proof}

Theorem~\ref{thm:scale} below, which is a scaling result, reduces the
optimisation problem (the determination of the Nash equilibrium) for
$n$ identical competing stores to that of the corresponding problem
for an appropriately redimensioned single store.

\begin{theorem}
  \label{thm:scale}
  Given the price functions~\eqref{eq:1} and a common
  efficiency~$\epsilon$, for each $n\ge1$, consider $n$ identical
  competing stores with common capacity $E^{(n)}$, common rate input
  and output constraints $P^{(n)}_I$ and $P^{(n)}_O$, and common
  starting and finishing levels $S^{(n)}_0$ and $S^{(n)}_T$
  respectively, where we have
  \begin{gather*}
    E^{(n)} = 2E^{(1)}/(n+1),\\
    P^{(n)}_I = 2P_I^{(1)}/(n+1), \qquad P^{(n)}_O = 2P_O^{(1)}/(n+1),\\
    S^{(n)}_0 = 2S_0^{(1)}/(n+1), \qquad S^{(n)}_T = 2S_T^{(1)}/(n+1). 
  \end{gather*}
 For each~$n$, let $S^{(n)}=(S^{(n)}_1,\dots,S^{(n)}_T)$ be the
  common policy over time of each of the stores at the unique and
  necessarily symmetric competitive Nash equilibrium.  Then, at this
  equilibrium and at each time $t$, the quantity traded by each store
  in the $n$-store problem is $2/(n+1)$ times the quantity traded in
  the single store problem, i.e.\
  $h(x_t(S^{(n)}))=2h(x_t(S^{(1)}))/(n+1)$.
\end{theorem}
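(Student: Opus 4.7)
My plan is to exploit the variational characterisation of the Nash equilibrium that is implicit in the proof of Theorem~\ref{thm:une}: the equilibrium is the unique minimiser of the strictly convex function~\eqref{eq:20} subject to the constraints~\eqref{eq:5} and~\eqref{eq:6}. I will combine this with a symmetrisation step and a single change of variable to reduce the $n$-store problem to the single-store problem of Section~\ref{sec:single-store-market}.

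First I would observe that, as the stores are identical, both the objective~\eqref{eq:20} and its constraint set are invariant under any permutation of the store labels; so if $(S_1^*,\ldots,S_n^*)$ is the unique minimiser then so is $(S_{\sigma(1)}^*,\ldots,S_{\sigma(n)}^*)$ for every permutation $\sigma$, and uniqueness forces $S_1^*=\cdots=S_n^*=S$ for some common policy $S$. Substituting this symmetric ansatz into~\eqref{eq:20}, and using $\sum_i h_i(x_t(S_i))=n\,h(x_t(S))$ together with $\sum_i h_i(x_t(S_i))^2=n\,h(x_t(S))^2$, the objective collapses to
\begin{equation*}
n\sum_{t=1}^T\Bigl[\bar p_t\, h(x_t(S)) + \tfrac{n+1}{2}\, p'_t\, h(x_t(S))^2\Bigr],
\end{equation*}
to be minimised over $S$ subject to the single-store-type constraints with the $n$-store parameters $E^{(n)},P_I^{(n)},P_O^{(n)},S_0^{(n)},S_T^{(n)}$.

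Second, I would rescale by setting $\tilde S=\tfrac{n+1}{2}S$. Because $h$ is positively homogeneous ($h(cx)=c\,h(x)$ for $c>0$), this multiplies every $h(x_t(S))$ by $\tfrac{n+1}{2}$, and the displayed objective becomes $\tfrac{2n}{n+1}$ times the standard single-store cost $\sum_t h(x_t(\tilde S))\bigl(\bar p_t + p'_t h(x_t(\tilde S))\bigr)$. The hypothesised scalings $E^{(n)}=2E^{(1)}/(n+1)$, and similarly for the rate and boundary parameters, are precisely what is needed for the constraints on $\tilde S$ to coincide with those of the single-store problem with parameters $E^{(1)},P_I^{(1)},P_O^{(1)},S_0^{(1)},S_T^{(1)}$. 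Appealing to the uniqueness of the single-store optimum (discussed in Section~\ref{sec:single-store-market}), the minimiser is $\tilde S=S^{(1)}$, so $S^{(n)}=\tfrac{2}{n+1}S^{(1)}$, and positive homogeneity of $h$ yields $h(x_t(S^{(n)}))=\tfrac{2}{n+1}h(x_t(S^{(1)}))$, as claimed.

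The only step that requires any genuine computation is the symmetric substitution into~\eqref{eq:20}: namely, that the combination of the pure sum of squares $\sum_i h_i(x_t(S_i))^2$ and the square of the sum $(\sum_i h_i(x_t(S_i)))^2$ produces the coefficient $\tfrac{n+1}{2}$ in front of $p'_t h(x_t(S))^2$. This coefficient is exactly what the subsequent rescaling $\tilde S=\tfrac{n+1}{2}S$ needs in order to eliminate all $n$-dependence from the integrand, and this matching of constants is really what drives the theorem. Once this algebraic observation is in hand, the remainder of the argument is bookkeeping, and I anticipate no further obstacle.
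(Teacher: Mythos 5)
Your proposal is correct and follows essentially the same route as the paper: it invokes the variational characterisation of the equilibrium from Theorem~\ref{thm:une}, restricts to the symmetric solution to obtain the reduced objective $n\sum_t\bigl[\bar p_t h(x_t(S))+\tfrac{1}{2}(n+1)p'_t h(x_t(S))^2\bigr]$, and then rescales to an $n$-independent single-store problem. Your explicit permutation-invariance argument for symmetry and your consistent choice of rescaling $\tilde S=\tfrac{n+1}{2}S$ are welcome refinements of details the paper leaves implicit, but the underlying proof is the same.
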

\begin{proof}
  It follows from Theorem~\ref{thm:une} that, for each $n$, $S^{(n)}$
  minimises the strictly convex function
  \begin{equation}
    \label{eq:21}
    n \sum_{t=1}^T
    \left(
      \bar p_t h(x_t(S^{(n)})) + \frac{1}{2}(n+1)p'_t h(x_t(S^{(n)}))^2
    \right)
  \end{equation}
    subject to the capacity constraints
  \begin{displaymath}
    S^{(n)}_0=S^*_0/(n+1), \quad S^{(n)}_T=S^*_T/(n+1),
    \quad 0 \le S^{(n)}_t\le E/(n+1), \ \, 1 \le t \le T-1,
  \end{displaymath}
  and the rate constraints
  \begin{displaymath}
     -P_I/(n+1) \le x_t(S^{(n)}) \le  P_O/(n+1),
    \quad 1 \le t \le T.
  \end{displaymath}
  The substitution $z_t=2(n+1)x_t(S^{(n)})$, for $t=1,\dots,T$, yields
  a single store
  minimisation problem which is independent of $n$ (apart from a
  factor $2n/(n+1)$ in the objective~\eqref{eq:21}) so that, for each
  $t$, $x_t(S^{(n)})$ (and so also $h(x_t(S^{(n)}))$) is proportional
  to $1/(n+1)$, so that the required result is now immediate.
\end{proof}

\begin{remark}
  The reduction in Theorem~\ref{thm:scale} (for linear price
  functions) of the problem for $n$ identical stores to a single store
  problem, allows also the application of the various sensitivity
  results of Sections~\ref{sec:sensitivity-results} and
  \ref{sec:impact-cons-surpl}.
\end{remark}

Theorem~\ref{thm:ucs} below shows that $n$ \emph{unconstrained} stores
(with identical efficiencies) in competition make very much less
profit in total than a single unconstrained store operating in the
same market.

\begin{theorem}
  \label{thm:ucs}
  Given the price functions~\eqref{eq:1} and a common
  efficiency~$\epsilon$, consider $n$ stores subject to neither
  capacity nor rate constraints.  Suppose further that the stores have
  a common starting level $S^*_0$ and the same common finishing level
  $S^*_T=S^*_0$, and that this level is sufficiently large that, at
  the (unique and necessarily symmetric) Nash equilibrium, the stores
  never empty.  Then, at this equilibrium, the quantity traded per
  store is proportional to $1/(n+1)$ and the profit per store is
  proportional to $1/(n+1)^2$.
\end{theorem}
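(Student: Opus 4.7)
The plan is to work directly from the first-order conditions furnished by Proposition~\ref{prop:1}, since Theorem~\ref{thm:scale} does not apply here: its hypotheses rescale the starting and finishing levels by $2/(n+1)$, whereas in Theorem~\ref{thm:ucs} these are held fixed across $n$. By Theorem~\ref{thm:une} the Nash equilibrium is unique, and by the symmetry of the problem it is symmetric, so every store follows a common policy $S^{(n)}$ with common per-store increment $x_t^{(n)} = x_t(S^{(n)})$. Because there are neither capacity nor rate constraints, and by hypothesis the level-zero boundary is never active, the only binding constraint in any store's subproblem is $\sum_t x_t^{(n)} = S_T^* - S_0^* = 0$. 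The complementary slackness conditions (iii) of Proposition~\ref{prop:1} then force the associated multiplier $\mu^{(n)}_t$ to be constant in $t$; call it $\mu_n$.

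Next I would write condition (ii) of Proposition~\ref{prop:1} explicitly at the symmetric solution, using the cost function~\eqref{eq:3} with the linear price~\eqref{eq:1}. Differentiating $h(x_{jt})[\bar p_t + p'_t \sum_i h(x_{it})]$ in $x_{jt}$ and evaluating at $x_{jt} = x_t^{(n)}$ (treating the kink of $h$ at zero via a subgradient) gives
\begin{align*}
\bar p_t + (n+1)\, p'_t\, x_t^{(n)} &= \mu_n && \text{when } x_t^{(n)} > 0,\\
\epsilon \bar p_t + (n+1)\, \epsilon^2 p'_t\, x_t^{(n)} &= \mu_n && \text{when } x_t^{(n)} < 0,
\end{align*}
with $\epsilon \bar p_t \le \mu_n \le \bar p_t$ when $x_t^{(n)} = 0$. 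Solving for $x_t^{(n)}$ displays each nonzero value as $1/(n+1)$ times a function of $\mu_n$ that is free of $n$; substituting into $\sum_t x_t^{(n)} = 0$ thus yields an equation of the form $\Phi(\mu_n)/(n+1) = 0$ with $\Phi$ independent of $n$, so $\mu_n = \mu^*$ does not depend on $n$. Comparing the FOC for general $n$ with the FOC at $n=1$ then gives $x_t^{(n)} = \frac{2}{n+1}\, x_t^{(1)}$, which is the first assertion (and, since $\alpha_n := 2/(n+1) > 0$, also shows that the partition of times into buying, selling and idle sets is the same for every $n$).

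For the profit claim, the per-store profit at the symmetric equilibrium is $\Pi^{(n)} = -\sum_t h(x_t^{(n)})[\bar p_t + n p'_t h(x_t^{(n)})]$. I would substitute $x_t^{(n)} = \alpha_n x_t^{(1)}$, split the sum according to the sign of $x_t^{(1)}$, and use the $n=1$ FOC to eliminate $\bar p_t$ and $\epsilon \bar p_t$ in favour of $\mu^*$. The contributions that are linear in $\alpha_n$ collect into $-\alpha_n \mu^* \sum_t x_t^{(1)}$, which vanishes by the $n=1$ zero-sum constraint; what survives is proportional to $\alpha_n^2 = 4/(n+1)^2$, establishing the second assertion. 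The only delicate bookkeeping is the subgradient treatment of the kink at $x_t = 0$ together with the invariance of the buying/selling partition across $n$; the latter comes for free from the proportionality $x_t^{(n)} = \alpha_n x_t^{(1)}$, and once both are in hand the proof is pure substitution.
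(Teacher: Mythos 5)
Your proof is correct and follows essentially the same route as the paper's: both derive the explicit first-order conditions (the paper's~\eqref{eq:23}), observe that the Lagrange multiplier enforcing $\sum_t x_t=0$ is independent of $n$ so that the traded quantities scale as $1/(n+1)$, and then use the first-order identity together with the zero-net-trade constraint to cancel the linear terms and leave a profit that is purely quadratic in the traded volumes. The only cosmetic difference is that you obtain the stationarity conditions from Proposition~\ref{prop:1} applied to each store's best response, while the paper reads them off from the minimisation of the potential function~\eqref{eq:20}; these are equivalent.
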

\begin{proof}
  The first assertion of the theorem may be deduced from the scaling
  result of Theorem~\ref{thm:scale}, and that theorem might be
  extended to enable also the second assertion of the present theorem
  to be deduced.  However, we use instead the argument below, which
  also explicitly identifies the behaviour of the stores.

  Write $\bar S=(\bar S_0,\dots,\bar S_T)$ (where
  $\bar S_T = \bar S_0 = S^*_0$) for the common policy over time of
  each of the stores at the Nash equilibrium.  It now follows from
  Theorem~\ref{thm:une} and the minimisation of the
  function~\eqref{eq:20} subject to the constraint
  \begin{equation}
    \label{eq:22}
    \bar S_T = \bar S_0,
  \end{equation}
  that this equilibrium is given by
 \begin{equation}
    \label{eq:23}
    x_t(\bar S) =
    \begin{cases}
      \dfrac{\lambda - \bar p_t}{(n+1)p'_t}, & \quad \bar p_t < \lambda\\
      0, & \quad \lambda \le \bar p_t \le \dfrac{\lambda}{\epsilon}\\
      \dfrac{\lambda - \epsilon \bar p_t}{(n+1)\epsilon^2 p'_t}, &
      \quad \bar p_t \ge \dfrac{\lambda}{\epsilon}.
    \end{cases}
  \end{equation}
  for some Lagrange multiplier~$\lambda$ such that \eqref{eq:22} is
  satisfied.  Note, in particular, that $\lambda$ is independent of
  $n$.  Thus, as $n$ varies, we have again that $(x_1(\bar
  S),\dots,x_T(\bar S))$ is proportional to $1/(n+1)$ as required.  It
  follows also from \eqref{eq:23} (by checking separately each of the
  three cases there) that, for all~$t$,
  \begin{equation}
    \label{eq:24}
    h(x_t(\bar S)) (\bar p_t + (n+1)p'_t h(x_t(\bar S))) = \lambda x_t(\bar S).
  \end{equation}
  It follows from~\eqref{eq:19} and from \eqref{eq:24} that, at the
  Nash equilibrium, each store~$j$ incurs a total cost (the negative
  of its profit) equal to
  \begin{align*}
    \sum_{t=1}^T h(x_t(\bar S)) (\bar p_t + np'_t h(x_t(\bar S)))
    & = \sum_{t=1}^T \lambda x_t(\bar S) - p'_t h(x_t(\bar S))^2\\
    & = - \sum_{t=1}^T p'_t h(x_t(\bar S))^2,
  \end{align*}
  where the first equality above follows from~\eqref{eq:24} and the
  second from \eqref{eq:22}.  Since, as $n$ varies, $(h(x_1(\bar
  S)),\dots h(x_T(\bar S)))$ is proportional to $1/(n+1)$, the
  required result for the profit of each store follows.
\end{proof}

Note that, under the conditions of the above theorem, the total
quantity traded by the $n$ stores (at each instant in time) is
$2n/(n+1)$ times that traded by a single store, while the total profit
made by the $n$ stores is $4n/(n+1)^2$ times that made by a single
store.  Thus we here quantify our earlier assertion of the
Introduction that competing stores overtrade (for the reasons already
discussed there) in comparison to the cooperative solution; as
$n\to\infty$ their combined profit decreases towards zero.  Clearly
also, were the stores subject to capacity or rate constraints, their
ability to negatively impact on each other would be less---as in the
example below.

\subsection{Example}
\label{sec:examples}

We consider again the half-hourly Market Index Price data for Great
Britain throughout 2014, as introduced in the example of
Section~\ref{sec:example}.  We again let the price function be as
given by~\eqref{eq:11} and (without loss of generality as explained in
Section~\ref{sec:example}) take the market impact factor $\lambda=1$.
We consider $n=1,2,3$ identical stores in competition, each with a
round-trip efficiency $\epsilon=0.75$.  For the single-store case
$n=1$, we take $E=10$ and common input and output rate constraint
$P=1$; for $n=2$ we take $E=5$ and $P=1/2$ for each of the two stores,
and for $n=3$ we take $E=10/3$ and $P=1/3$ for each of the three
stores.  Thus the total storage available in each case is the same.
The values of $E$ and $P$ are chosen so that the constraints on the
stores are not so severe as to force essentially identical combined
behaviour of the stores for each of the three values of $n$
considered; nor are they so lax that the stores behave as if they were
unconstrained as considered in Theorem~\ref{thm:ucs}.  For each $n$,
we consider the unique Nash equilibrium in which each of the $n$
stores optimises its behaviour (minimises its cost) over the entire
year subject to the constraints of starting and finishing empty, and
(for $n>1$) given the behaviour of the remaining store(s).

In the units of the example---for a discussion of which again see
Section~\ref{sec:example}---the total profits made throughout the year
by the $n$ stores are 4096 for $n=1$, 3733 for $n=2$ and 3267 for
$n=3$.  For each of the latter two cases, if the stores were to
cooperate instead of competing, they would make the same total profit
as in the single store case.  Thus the decrease in total profit is
again due to the effects of competition.  However, note that as $n$
increases through the above three values the total profit decreases at
a rate which is slower than that in the case of unconstrained stores,
as given by Theorem~\ref{thm:ucs}.

Figure~\ref{fig:competition} shows the total level of the $n=1,2,3$
stores and the corresponding market clearing prices (again in the
units of the example) over the first two weeks of the year.  The upper
panel of the figure clearly shows that $n=2$ and $n=3$ competing
stores consistently overtrade in relation to the case $n=1$
(corresponding to the cooperative solution).  The lower panel shows
the extent to which competition between multiple stores smooths market
clearing prices, which is of course associated with the reduction in
overall profits.  The times of maximum store activity correspond to
the peaks and troughs of the market clearing price and it is these
peaks and troughs which are smoothed by the competition.  Note also
that, because the round-trip efficiency $\epsilon=0.75$ is
significantly less than $1$, there are significant periods of during
which the stores neither buy nor sell.

\begin{figure}[!ht]
  \centering
  \includegraphics[scale=0.76]{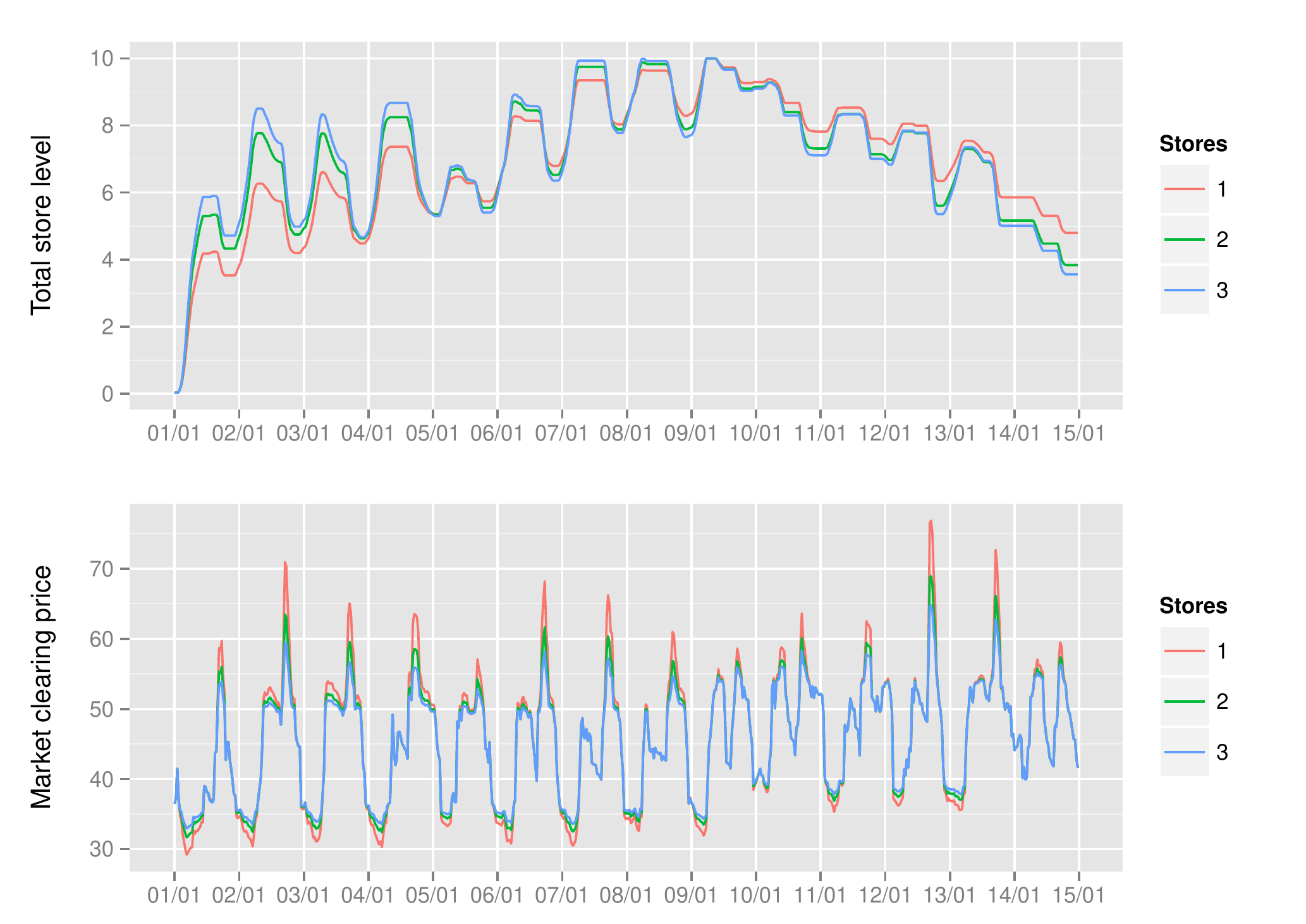}
  \caption{Total store level and market clearing price for each of
    $n=1,2,3$ stores in competition.}
  \label{fig:competition}
\end{figure}


  


\section{Variant problems}
\label{sec:variant-problems}

Heretofore we have considered the optimal control of stores where the
objective of each has in general been to maximise its own profit,
obtained through price arbitrage over time.  Such behaviour has a
variable effect on both producers (in the case of energy the
generators) and consumers.  However, a store may alternatively be used
to maximise the benefit either to the consumers (i.e.\ to society, if
the generators are excluded from the latter), or to the generators, or
to society as a whole.  We consider briefly each of these
possibilities, so as to show that in each case essentially the same
mathematical model applies---and hence also both the form of its
solution and insights into the effects of competitive behaviour.

\paragraph{One or more stores owned by the consumers.}
\label{sec:one-or-more}

Suppose that a single store is notionally owned by the consumers
(i.e.\ by society if the latter excludes the generators).  Here the
problem is to use it so as to maximise the benefit to society.  If at
each time $t$ an amount $x_t$ (positive or negative) is placed in the
store, then this has a total consumer cost (again positive or
negative) which is the sum of the extra payment to the generator plus
the reduction in consumer surplus due to the market impact of the
activity of the store (the reduction in consumer surplus being zero in
the case where the generator has a flat supply function).  The vector
$x=(x_1,\dots,x_T)$ should then be chosen so as to minimise this total
cost, and that is just an instance of the mathematical problem
considered in Section~\ref{sec:single-store-market} and for which
Proposition~\ref{prop:1} describes the form of the optimal solution.
Note that in the case where the generator's prices are constant over
both volume and time, the store, even if perfectly efficient, is of
zero value.

\paragraph{One or more stores  owned by the generator.}
\label{sec:one-or-more-1}

Now suppose that a store is owned by a generator, and is used by the
latter with the intention of maximising its own total profit.  Thus
if, at each time~$t$, an amount $x_t$ (positive or negative) is placed
in the store, then this has a cost to the generator which is simply
that of producing it; further, if (at that time) the generator's
production costs are nonlinear, the generator will re-optimise the
amount supplied to the market, thereby affecting its profit from that
activity; hence we may determine the total cost to the generator of
the action $x_t$.  The vector $x=(x_1,\dots,x_T)$ may then be chosen
so as to minimise this total cost (i.e.\ to maximise profit), and this
is again just an instance of the problem considered in
Section~\ref{sec:single-store-market}.  Again in the case where the
generator's production costs are linear and constant over time, the
store, even if perfectly efficient, is of zero value.

\paragraph{Both generators and stores  owned by society.}
\label{sec:both-gener-stor}

Finally suppose that both the generator(s) and any store are owned by
the consumers, i.e.\ by society, and managed jointly so as to maximise
the benefit to society.  In the absence of the store, the generator's
supply function may be replaced by its (inverse) cost function i.e.\
that function which gives the amount which may be (just) economically
supplied as a (generally increasing) function of unit price; the point
of intersection of this function with the demand function gives the
optimal price, and the (optimised) benefit to society is the consumer
surplus at that price.  The introduction of the store now modifies
this theory in a manner entirely analogous to that in the earlier case
where just the store is owned by society.





\section{Conclusions}
\label{sec:conclusions}

In the present paper we have considered how storage, operating as a
price maker within a market environment, may be optimally operated
over an extended or indefinite period of time.  The optimality
criterion may be that of maximising the profit over time of the
storage itself, where this profit results from the ability of the
storage to exploit differences in market clearing prices at different
times.  Alternatively it may be that of minimising over time the cost
of generation, or of maximising consumer surplus or social welfare.
In all cases there is calculated for each successive step in time the
cost function measuring the total impact of whatever action (amount to
buy or sell) is taken by the storage.  The succession of such cost
functions provides the appropriate information to the storage as to
how to behave over time, forming the basis of the appropriate
mathematical optimisation problem.  Further optimal decision making,
even over a very long time period, usually depends on a knowledge of
costs over a relatively short running time horizon---in the case of
the storage of electrical energy typically of the order of a day or
so.  We have also studied the various economic impacts---on market
clearing prices, consumer surplus and social welfare---of the
activities of the storage.  Where these impacts are considered
undesirable, the remedy is again the modification of the successive
cost signals supplied to the storage.  We have given examples based on
real Great Britain market data.

We have be particularly concerned to study competition between
multiple stores, where the objective of each store is to maximise its
own income given the activities of the remainder.  We have shown that
at the Nash equilibrium---with respect to Cournot
competition---multiple stores of sufficient size collectively erode
their own abilities to make profits: essentially each store attempts
to increase its own profit over time by overcompeting at the expense
of the remainder.  We have quantified this in the case of linear price
functions, and again given examples based on market data.

\section*{Acknowledgements}
\label{sec:acknowledgements-1}

The authors wish to thank their co-workers Frank Kelly and Richard
Gibbens for very helpful discussions during the preliminary part of
this work.  They are also grateful to the Isaac Newton Institute for
Mathematical Sciences in Cambridge for their funding and hosting of a
number of most useful workshops to discuss this and other mathematical
problems arising in particular in the consideration of the management
of complex energy systems.  Thanks also go to members of the IMAGES
research group, in particular Michael Waterson, Robert MacKay, Monica
Giulietti and Jihong Wang, for their support and useful discussions.
The authors are further grateful to National Grid plc for additional
discussion on the Great Britain electricity market, and finally to the
Engineering and Physical Sciences Research Council for the support of
the research programme under which the present research is carried
out.

\bibliography{storage_refs}
\bibliographystyle{plain}

\end{document}